\def\blue#1{\textcolor[rgb]{0.0,0.0,1.0}{#1}}
\newcommand{\T}{{\mathcal T}}
\newcommand{\C}{{\mathcal C}}
\newcommand{\Real}{\mathbb R}
\newcommand{\abs}[1]{\left\vert#1\right\vert}
\newcommand{\To}{\longrightarrow}
\newcommand{\tm}{\T M}
\def\pa{\partial}
\def\paa{\dot{\partial}}
\def\+{\!+\!}
\def\={\!=\!}
\def\<{\!<\!}
\def\>{\!>\!}
\let\oldmarginpar\marginpar
\renewcommand\marginpar[1]{\oldmarginpar[\raggedleft\footnotesize #1]%
  {\blue{\raggedright \footnotesize \fbox{
      \begin{minipage}{1.0\linewidth}
        #1
      \end{minipage}
}}}}
\numberwithin{equation}{section} %% Comment out for sequentially-numbered
\numberwithin{figure}{section} %% Comment out for sequentially-numbered
\theoremstyle{plain}
\newtheorem*{theorem*}{Theorem}
\newtheorem{theorem}{Theorem}[section]
\newtheorem{lemma}[theorem]{Lemma}
\newtheorem{proposition}[theorem]{Proposition}
\newtheorem{corollary}[theorem]{Corollary}
\theoremstyle{definition}
\newtheorem{definition}[theorem]{Definition}
\newtheorem{property}[theorem]{Property}
\theoremstyle{remark}
\newtheorem{example}{Example}
\newtheorem{remark}[theorem]{Remark}
\newtheorem*{acknowledgement*}{Acknowledgement}
\begin{document}

\title{ \bf   Finsler surfaces with  vanishing $T$-tensor}

\author{S. G. Elgendi }
\date{}

\maketitle
\vspace{-1.20cm}

\begin{center}
{Department of Mathematics, Faculty of Science,\\
Islamic University of Madinah, Madinah, Saudi Arabia}
\vspace{-8pt}
\end{center}

\begin{center}
{Department of Mathematics, Faculty of Science,\\
Benha University, Benha, Egypt}
\vspace{-8pt}
\end{center}

\begin{center}
salah.ali@fsc.bu.edu.eg, salahelgendi@yahoo.com
\end{center}

\vspace{0.3cm}

 \begin{abstract}
  In this paper, for Finsler surfaces,  we prove that  the T-condition and $\sigma T$-condition coincide.  For higher dimensions $n\geq 3$, we illustrate by  an example  that the  T-condition and $\sigma T$-condition    are not equivalent.    We show  that the non-homothetic  conformal change of a  Berwald (resp.  a Landsberg)  surface is  Berwaldian (resp. Landsbergian) if and only if   the $\sigma T$-condition is satisfied.  By solving the Landsberg's PDE, we classify all Finsler surfaces satisfying the T-condition, or equivalently the $\sigma T$-condition. Some examples are provided and studied.
 \end{abstract}

\noindent{\bf Keywords:\/}\, T-tensor; T-condition; $\sigma T$-condition;  Landsberg's PDE; Landsberg surfaces.

\medskip\noindent{\bf MSC 2020:\/}  53C60, 53B40.

\section{Introduction}
~\par

 The   $T$-tensor was introduced by M. Matsumoto \cite{ttensor}, it plays an important role in Finsler geometry and its  applications, especially, in  general relativity.   M. Hashiguchi \cite{Hashiguchi} studied the conformal change of Finsler metrics and showed that a Landsberg space remains a Landsberg space under any conformal change   if and only if its $T$-tensor vanishes.   Z. I. Szab\'o \cite{Szabo_$T$-tensor} proved that  a positive definite Finsler manifold with vanishing $T$-tensor is Riemannian. For more applications and details,  we refer,  for example, to \cite{Asanov2,Asanov1,Asanov_$T$-tensor}.

 \bigskip

In \cite{Asanov_$T$-tensor}, Asanov has studied the Finsler metrics with vanishing T-tensor, or in other words, the Finsler metrics satisfying the T-condition. So a Finsler metric satisfies the T-condition if the T-tensor vanishes. Moreover, in \cite{Elgendi-LBp} tackling the Landsberg's unicorn problem,  a weaker condition appeared.   In addition, later in \cite{Elgendi-ST_condition}, this condition is studied with more attentions and it is  called the $\sigma T$-condition.
A Finsler space $(M,F)$ is said to satisfy the   $\sigma T$-condition if $M$ admits a non-constant function $\sigma(x)$ such that
 $$\sigma_r T^r_{jk\ell}=0, \quad \sigma_r:=\frac{\partial \sigma}{\partial x^r}.$$
 Let $(M,F)$ be a Finsler space and $F$ be a positive definite metric. If the $\sigma T$-condition holds for \textit{every} $\sigma\in C^\infty(M)$,     then the T-tensor vanishes, i.e., the T-condition is satisfied. Therefore, by Szab\'o's observation $(M,F)$ is Riemannian.   So it will be more beneficial or interesting to consider the case when $\sigma T$-condition is satisfied for \textit{some} $\sigma\in C^\infty(M)$.

 \bigskip

  In \cite{Elgendi-LBp,Elgendi-ST_condition},  the $(\alpha,\beta)$-metrics that satisfy  the condition $\sigma_r T^r_{jkh}=0$  are characterized. An $(\alpha,\beta)$-metric $F$ is a metric on  the form $F=\alpha\phi(s)$, $s:=\frac{\beta}{\alpha}$.
It was shown that an  $(\alpha,\beta)$-metric with $n\geq 3$ satisfies the T-condition if and only if it is Riemannian or $\phi(s)$ has the following form
 \begin{equation}
 	\label{Eq:class_T_condition}
 	\phi(s)=    f(x)     s^{\frac{c b^2-1}{cb^2}}( b^2-s^2)^{\frac{1}{2cb^2}}
 \end{equation}
where $c$ is a constant and $f(x)$ is an arbitrary function on $M$ and $b^2:=\|\beta\|_\alpha$.  Also, an $(\alpha,\beta)$-metric  with  $n\geq 3$   satisfies the $\sigma T$-condition  if and only if the $T$-tensor vanishes  or $\phi(s)$ is given by
 \begin{equation}
	\label{Eq:class_sigmaT_condition}
\phi(s)=c_3 \,\exp\left(\int_0^s \frac{c_1\sqrt{b^2-t^2}+c_2 t}{t(c_1\sqrt{b^2-t^2}+c_2 t)+1}dt\right)
\end{equation}
where $c_1$, $c_2$ and $c_3$ are arbitrary constants.

It is worthy  to mention that the class \eqref{Eq:class_sigmaT_condition} has  been  already obtained by Z. Shen \cite{Shen_example}, in a completely different context,  with some restrictions on $\alpha$ and $\beta$.  Moreover, in \cite{Elgendi-LBp}, it was  shown  that the long existing problem of  Landsberg  non-Berwaldian spaces  is
related to the $\sigma T$-condition.

 \bigskip

  In the present paper, for higher dimensions, we show that T-condition and $\sigma T$-condition on Finsler manifolds are not equivalent. The classes  \eqref{Eq:class_T_condition} and \eqref{Eq:class_sigmaT_condition} are good illustration to this fact.   For concrete examples, see Examples \ref{Example-1} and \ref{Example-2}. We prove that the  T-condition and $\sigma T$-condition on Finsler surfaces  coincide.    As a result, we show  that a non-homothetic conformal change of a    Landsberg  surface is  Landsbergian if and only if   the T-tensor vanishes.  Moreover, we prove that a non-homothetic conformal change of a Finsler surface preserves the property of being Berwaldian if and only if the T-tensor vanishes or equivalently the $\sigma T$-condition is satisfied.

By solving the Landsberg's PDE, we characterize all Finsler surfaces with vanishing T-tensor, that is, a Finsler surface $(M,F)$ has vanishing T-tensor if and only if 
  \begin{equation*}
   F(x,y)=\sqrt{c_3(y^2)^2+(c_2c_3-4c_1+1)y^1y^2+c_2 (y^1)^2} \ e^{ \frac{(-c_2c_3+4c_1+1 )\operatorname{ arctanh}\left( \frac{2c_3 y^2+(c_2c_3-4c_1+1)y^1}{y^1\sqrt{c_2^2c_3^2-8c_1c_2c_3+16c_1^2-2c_2c_3-8c_1+1}} \right) }{\sqrt{c_2^2c_3^2-8c_1c_2c_3+16c_1^2-2c_2c_3-8c_1+1}}}
 \end{equation*}
  or
  \begin{equation*}
  F(x,y)=\sqrt{a(y^2)^2+by^1y^2+ (y^1)^2} \ e^{ - \frac{b}{\sqrt{b^2-4a}}\ \text{arctanh}\left( \frac{2a y^2+b y^1 }{ y^1\sqrt{b^2-4a}}\right) }
 \end{equation*}
  where $a$,  $b$, $c_1$, $c_2$ and $c_3$    are functions of $x^1$ and $x^2$.

\section{Preliminaries}

~\par

Let $M$ be an $n$-dimensional manifold,  $(TM,\pi_M,M)$ be the tangent bundle and $(\T M,\pi,M)$ be the subbundle of nonzero tangent vectors. The notation $C^\infty(M)$ stands for the $\Real$-algebra of smooth real-valued functions on $M$;  $\mathfrak{X}(M)$ stands for the $C^\infty(M)$-module of vector fields on $M$.  We denote by $(x^i) $ the local coordinates on the  manifold $M$, and  by $(x^i, y^i)$ the induced coordinates on the tangent bundle $TM$.  The vector $1$-form $J$ on $TM$ defined by $J = \frac{\partial}{\partial y^i} \otimes dx^i$ is  the natural almost-tangent structure of $T M$. The vertical vector field $\C=y^i\frac{\partial}{\partial y^i}$ on $TM$ is  the canonical or the
Liouville vector field.

A vector field $S\in \mathfrak{X}(\T M)$ is  a spray if $JS = \C$ and $[\C, S] = S$. Locally, a spray $S$ is given by
\begin{equation}
  \label{eq:spray}
  S = y^i \frac{\partial}{\partial x^i} - 2G^i\frac{\partial}{\partial y^i},
\end{equation}
where $G^i=G^i(x,y)$ are the {spray coefficients}.
A nonlinear connection is  an $n$-dimensional distribution (called the horizontal distribution) $H : u \in \tm \rightarrow H_u\subset T_u(\tm)$ and supplementary to the vertical distribution, that is,  for all $u \in \tm$, we have
\begin{equation}
  \label{eq:direct_sum}
 T_u(\tm) = H_u(\tm) \oplus V_u(\tm).
\end{equation}

Every spray S induces a canonical nonlinear connection through the corresponding horizontal and vertical projectors,
\begin{equation}
  \label{projectors}
    h=\frac{1}{2}  (Id + [J,S]), \,\,\,\,            v=\frac{1}{2}(Id - [J,S]).
\end{equation}
 With respect to the induced nonlinear connection, a spray $S$ is horizontal, which means that $S = hS$. Locally, the two projectors $h$ and $v$ can be expressed as follows
$$h= \delta_i\otimes dx^i, \quad\quad v=\paa_i\otimes \delta y^i,$$
where we use the notations
$$\delta_i:=\frac{\partial}{\partial x^i}-G^j_i(x,y)\paa_j, \quad \paa_i:=\frac{\partial}{\partial y^i},\quad \delta y^i=dy^i+G^i_j(x,y)dx^j, \quad G^j_i(x,y)=\paa_i G^j .$$
Moreover, the coefficients of the Berwald connection are given by
$$ G^h_{ij}={\paa_i G^h_j}.$$

\begin{definition}
An $n$-dimensional Finsler manifold  is a pair $(M,F)$, where $M$ is an $n$-dimensional  differentiable manifold and $F$ is a map  $$F: TM \To \Real ,\vspace{-0.1cm}$$  such that{\em:}
 \begin{description}
    \item[(a)] $F$ is smooth and strictly positive on $\T M$ and $F(x,y)=0$ if and only if $y=0$,
    \item[(b)]$F$ is positively homogeneous of degree $1$ in the directional argument $y${\em:}
    $\mathcal{L}_{\mathcal{C}} F=F$,
    \item[(c)] The metric tensor $g_{ij}= \paa_i\paa_j E  $ has rank $n$ on $\T M$, where $E:=\frac{1}{2}F^2$ is the energy function.
 \end{description}
 \end{definition}
In this case $(M, F)$ is called regular Finsler manifold. If $F$ satisfies   the  conditions (a)-(c) on a conic subset of $TM$, then       $(M, F)$   is called a conic   Finsler manifold.

\medskip

The \emph{Berwald tensor} (curvature) $G$    and the Landsbeg tensor $L$ are given, respectively,  by
\begin{equation}
\label{Berwald_curv.}
G=G^h_{ijk} dx^i\otimes dx^j\otimes dx^k\otimes\paa_h
\end{equation}
\begin{equation}
\label{Landsberg_Tensor}
L=L_{ijk} dx^i\otimes dx^j\otimes dx^k,
\end{equation} where $L_{ijk}=-\frac{1}{2}F G^h_{ijk}\paa_hF$, $G^h_{ijk}=\paa_kG^h_{ij}$ ,  see \cite{Shen-book}.
 \begin{definition}
A Finsler manifold $(M,F)$ is said to be \textit{Berwald} if the Berwald tensor $G^{h}_{ijk}$ vanishes identically, and   $(M,F)$  is called \textit{Landsberg} if the Landsberg tensor $L_{jkh}$ vanishes identically.
\end{definition}
 
The T-tensor plays an important role in Finsler geometry, it is introduced  by Matsumoto \cite{ttensor}.  For a Finsler   manifold $(M,F)$, the T-tensor  is defined by
\begin{equation} \label{T-tensor}
T_{hijk}=FC_{hijk}-F(C_{rij}C^{r}_{hk}+C_{rjh}C^{r}_{ik}+C_{rih}C^{r}_{jk})
+C_{hij}\ell_k+C_{hik}\ell_j +C_{hjk}\ell_i+C_{ijk}\ell_h,
\end{equation}
where $C_{ijk}:=\frac{1}{2}\dot{\partial}_kg_{ij}$ are  the components of the  Cartan tensor, $\ell_i:=\dot{\partial}_iF$,  $C_{ijkh}=\dot{\partial}_hC_{ijk},$  $C^{h}_{ij}=C_{\ell ij}g^{\ell h}$  and $g^{ij}$ are  the components of the inverse  metric tensor.

\section{T-condition and $\sigma T$-condition}

~\par

A Finsler space $(M,F)$ satisfies the \textit{T-condition} if  its $T$-tensor vanishes. In \cite{Asanov_$T$-tensor}, has studied the Finsler spaces satisfying the T-condition. Similarly, in \cite{Elgendi-ST_condition} the notion of  \textit{$\sigma T$-condition} is introduced. A Finsler space $(M,F)$ satisfies   the \textit{$\sigma T$-condition} if it admits  a non-constant function  $\sigma (x)$ such that $\sigma_hT^h_{ijk}=0$, $\sigma_h:=\frac{\partial \sigma}{\partial x^h}$.

Making use of the classes \eqref{Eq:class_T_condition} and \eqref{Eq:class_sigmaT_condition}, we give the following two examples.
The first example provides a Finsler metric  satisfying the T-condition and the second one satisfying the $\sigma T$-condition.

By making use of \cite{Elgendi-ST_condition}, we have the following Finsler metric that satisfies the T-condition.
\begin{example}\label{Example-1}
Let $M=\mathbb{R}^n$, $n\geq 3$   and $\alpha=|y|$ be the Euclidean norm. Assuming that  $\beta=y^1$, then $b^2=1$. Let $F$ be the $(\alpha,\beta)$-metric given by
$$F=\alpha \phi(s), \quad \phi(s)=\sqrt{s}(1-s^2)^{1/4}.$$
The Finsler manifold $(M,F)$ satisfies the T-condition, that is, $T^h_{hijk}=0$.
\end{example}

By using \cite{Elgendi-solutions}, we have the following example.
\begin{example}\label{Example-2}
	Let $M=\mathbb{R}^n$, $n\geq 3$  and  $\beta=f(x^1)y^1$ and $\alpha=f(x^1)\sqrt{(y^1)^2+\varphi(\hat{y})}$, where $f(x^1)$ is a positive smooth function on $\Real$ and $\varphi$ is an arbitrary quadratic function in $\hat{y}$ and $\hat{y}$ stands for the variables $y^2,...,y^n$.
	Let the Finsler function $F$ on $\mathbb{R}^n$ be an a special $(\alpha,\beta)$-metric given by
$${F}=\left(a \beta+\sqrt{\alpha^2-\beta^2}\right)\,\exp{\left(\frac{a \beta}{a\beta+\sqrt{\alpha^2-\beta^2}}\right)}, \quad a\neq 0.$$
One can use the package \cite{NF_Package} to find that $T^1_{ijk}=0$ and some other components $T^\mu_{ijk}$ are non-zero.
Assuming that   $\sigma_h=b_h$, taking into account the fact that $b_1=f(x^1)\neq 0$, $b_2=...=b_n=0$,  we conclude that
$$\sigma_hT^h_{ijk}=b_hT^h_{ijk}=0.$$
That is  $(M,F)$ satisfies the  $\sigma T$-condition.
\end{example}

 The above two examples show that the T-condition and $\sigma T$-condition on higher dimensional manifolds are not equivalent. 
 
 \subsection{Finsler surfaces}

For the two-dimensional case,  in \cite{Berwald}, Berwald has introduced a  frame for the positive definite surfaces.  Later, in \cite{Matsumoto},  B\'{a}sc\'{o} and Matsumoto    have modified the Berwald frame to cover   the non positive definite surfaces. The  modified frame of a Finsler surface $(M,F)$  is given by $(\ell^i,m^i)$, where  $m^i$ is a vector which is orthogonal to the supporting element $\ell_i$ and the co-frame is $(\ell_i,m_i)$. Moreover, we have
 $$m_i=g_{ij}m^j, \quad m^im_i=\varepsilon, \quad \ell^im_i=0,$$
 where   $g_{ij}=\ell_i\ell_j+\varepsilon m_im_j$, $\varepsilon=\pm 1$ and the sign $\varepsilon$ is  called the signature of $F$. In the positive definite case, $\varepsilon=+1$.

 \medskip
 For a scalar function $L$ on $\T M$, we  write the horizontal covariant derivative of $L$ with respect to Berwald connection as follows:
 $$L_{|i}=L_{,1} \ell_i+L_{,2}m_i,$$
 where
 $L_{,1}=\ell^iL_{|i}$, $L_{,2}=m^iL_{|i}.$
 Also, we can write
 $$F\paa_i L= L_{;1}\ell_i+L_{;2}m_i.$$
 \begin{property}
 	\label{property}
If $L$ is homogeneous of degree $0$ in $y$, then  $L_{;1}=0$ and hence
$$F\paa_i L= L_{;2}m_i.$$
 \end{property}

 \begin{lemma}\label{Lemma:Matsumoto}\cite{Matsumoto}
 For Finsler surface $(M,F)$, we have  the following associated geometric objects:
 \begin{description}
\item[(a)] The Cartan tensor: $C_{ijk}= \frac{I}{F}m_im_jm_k$,

\item[(b)] The Berwald tensor: $G^h_{ijk}= \frac{1}{F}\{-2I_{,1}\ell^h+(I_{,2}+I_{,1;2})m^h\}m_im_jm_k$,

\item[(c)] The Landsberg tensor: $L_{ijk}=-\frac{1}{2}F\ell_hG^h_{ijk}= I_{,1}m_im_jm_k$,

\item[(d)] The T-tensor: $T^h_{ijk}=\frac{1}{F}I_{;2}m^hm_im_jm_k$.
\end{description}
where $I$ is a $0$-homogeneous function in $y$ and called the main scalar of the manifold $(M,F)$.
 \end{lemma}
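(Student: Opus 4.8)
This is the classical B\'asc\'o--Matsumoto description of the fundamental tensors of a Finsler surface in the adapted frame (see \cite{Matsumoto}), and the plan is to reconstruct it from the Berwald frame $(\ell^i,m^i)$ by assembling a few elementary identities in order. From $\ell_i=\dot\partial_iF$ and $g_{ij}=\ell_i\ell_j+\varepsilon m_im_j$ one reads off the vertical rule $\dot\partial_k\ell_i=\tfrac{\varepsilon}{F}m_im_k$ (from $g_{ik}=\ell_i\ell_k+F\dot\partial_k\ell_i$), and one has $\ell_{i|h}=\dot\partial_i(F_{|h})=0$ (using $\delta_hF=0$ and a Ricci-type identity for the Berwald connection). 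The only genuinely two-dimensional fact is that $m_i$ spans the annihilator of $\ell^i$, so every totally symmetric $3$-covariant tensor killed in each slot by $\ell^i$ is a scalar multiple of $m_im_jm_k$, and likewise slot-by-slot for a $(1,3)$-tensor symmetric and $\ell$-orthogonal in its covariant arguments. Applying this to the Cartan tensor --- which is totally symmetric and satisfies $y^iC_{ijk}=0$ because $g_{ij}$ is $0$-homogeneous --- gives $C_{ijk}=\lambda\,m_im_jm_k$; since $C_{ijk}$ is $(-1)$-homogeneous while $m_i$ is $0$-homogeneous, $\lambda=I/F$ with $I$ of degree $0$. This is part (a), and it defines the main scalar. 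Substituting $C_{ijk}=\tfrac{I}{F}m_im_jm_k$ back into $C_{ijk}=\tfrac12\dot\partial_kg_{ij}$ and matching frame components then yields the companion vertical rule $\dot\partial_km_i=\tfrac{1}{F}(\varepsilon I\,m_im_k-\ell_im_k)$; and $g_{ij|h}=-2L_{ijh}$ together with $\ell_{i|h}=0$ and the symmetry and $y$-orthogonality of $L_{ijh}$ forces $m_{i|h}$ to be a multiple of $m_im_h$.

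Part (c) follows from the standard relation $L_{ijk}=y^mC_{ijk|m}$ (equivalently, $g_{ij|m}=-2L_{ijm}$ for the Berwald connection): differentiating $C_{ijk}=\tfrac{I}{F}m_im_jm_k$ horizontally and contracting with $y^m$, the $F_{|m}$ term vanishes and each $m_{i|m}$ term is proportional to $m_m$, hence annihilated by $y^mm_m=F\ell^mm_m=0$; what survives is $(\ell^mI_{|m})\,m_im_jm_k=I_{,1}\,m_im_jm_k$. (Alternatively, (c) drops out of (b) below via $L_{ijk}=-\tfrac12F\ell_hG^h_{ijk}$.) For (b), write $G^h_{ijk}=\lambda^h m_im_jm_k$, which is legitimate since $G^h_{ijk}$ is symmetric and $y$-orthogonal in $ijk$. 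Contracting with $\ell_h$ and using $L_{ijk}=-\tfrac12F\ell_hG^h_{ijk}$ together with (c) gives the $\ell^h$-coefficient $-2I_{,1}/F$. The $m^h$-coefficient requires evaluating $m_hm^im^jm^k\,\dot\partial_kG^h_{ij}$; this is the point where the relation between the Berwald connection coefficients $G^h_{ij}$ and the Cartan data (equivalently, Matsumoto's structure equations for the surface frame) enters, and after applying the vertical rule for $\dot\partial_k$ and the commutation rule between the horizontal and the vertical derivative of $I$ one obtains $\tfrac{1}{F}(I_{,2}+I_{,1;2})$, which is the stated coefficient.

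Part (d) is a direct substitution into the definition \eqref{T-tensor}. Using $\dot\partial_k(1/F)=-\ell_k/F^2$, Property \ref{property} (so that $F\dot\partial_kI=I_{;2}m_k$), and the vertical rule for $\dot\partial_km_i$, one computes
\[
FC_{hijk}=\frac{I_{;2}+3\varepsilon I^2}{F}\,m_hm_im_jm_k-\frac{I}{F}\bigl(\ell_hm_im_jm_k+\ell_im_hm_jm_k+\ell_jm_hm_im_k+\ell_km_hm_im_j\bigr).
\]
Each of the three contractions $C_{rij}C^r_{hk}$, $C_{rjh}C^r_{ik}$, $C_{rih}C^r_{jk}$ equals $\tfrac{\varepsilon I^2}{F^2}m_hm_im_jm_k$ (by $m^rm_r=\varepsilon$), so $-F$ times their sum cancels the $3\varepsilon I^2$ term; and the four terms $C_{hij}\ell_k+C_{hik}\ell_j+C_{hjk}\ell_i+C_{ijk}\ell_h$ reproduce exactly $\tfrac{I}{F}\bigl(\ell_hm_im_jm_k+\ell_im_hm_jm_k+\ell_jm_hm_im_k+\ell_km_hm_im_j\bigr)$, cancelling the remaining bracket in $FC_{hijk}$. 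What is left is $T_{hijk}=\tfrac{I_{;2}}{F}m_hm_im_jm_k$, and raising the index with $g^{hr}$ yields $T^h_{ijk}=\tfrac{I_{;2}}{F}m^hm_im_jm_k$, which is part (d).

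The transparent parts are (a), (c), and the long but purely mechanical cancellation in (d). The genuine obstacle is the $m^h$-component of the Berwald tensor in part (b): one has to express the coefficients $G^h_{ij}$ through the Cartan data and carefully interchange a horizontal differentiation with a vertical differentiation of the main scalar, and it is exactly this interchange that manufactures the mixed quantity $I_{,2}+I_{,1;2}$. (Since the statement is quoted from \cite{Matsumoto}, one may of course simply cite it.)
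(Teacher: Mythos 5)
The paper does not actually prove this lemma: it is quoted from B\'acs\'o--Matsumoto \cite{Matsumoto} (ultimately going back to Berwald's two-dimensional theory), so the only comparison available is with the classical derivation you are reconstructing, and your closing remark that a citation suffices matches what the paper does. Within your reconstruction, parts (a), (c) and (d) are sound. The rank-one argument in dimension two plus homogeneity does give $C_{ijk}=\tfrac{I}{F}m_im_jm_k$; the vertical rules $F\dot\partial_k\ell_i=\varepsilon m_im_k$ and $F\dot\partial_km_i=-(\ell_i-\varepsilon I m_i)m_k$ are correct (the latter is exactly the frame formula the paper itself quotes later), and the cancellation you describe in (d) is right: the $3\varepsilon I^2$ term is killed by the three $C\cdot C$ contractions via $m^rm_r=\varepsilon$, and the four $\ell$-terms cancel the bracket, leaving $T_{hijk}=\tfrac{I_{;2}}{F}m_hm_im_jm_k$, which raises correctly since $g^{hr}m_r=m^h$. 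For (c) you lean on the identities $g_{ij|k}=-2L_{ijk}$ and $L_{ijk}=C_{ijk|m}y^m$ for the Berwald connection; these are classical, but since the paper defines $L_{ijk}=-\tfrac12F\ell_hG^h_{ijk}$, the sign-consistency of that input with this definition should be verified or explicitly cited --- it is an unproved (though correct) ingredient, not a wrong one. (Also note your parenthetical alternative, deducing (c) from (b), would be circular with your actual derivation of (b), which uses (c) to fix the $\ell^h$-coefficient.)

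The genuine gap is exactly where you flag it: the $m^h$-coefficient in (b). Writing $FG^h_{ijk}=(a\ell^h+bm^h)m_im_jm_k$ and getting $a=-2I_{,1}$ from (c) is fine, but $b=I_{,2}+I_{,1;2}$ is the one piece of the lemma that is not determined by (a), (c), (d): in two dimensions $G^h_{ijk}$ has two independent scalar components, and the Landsberg tensor only sees the $\ell^h$-one. Your proposal asserts that ``the vertical rule and the commutation rule between the horizontal and the vertical derivative of $I$'' produce $I_{,2}+I_{,1;2}$, but you neither state the required expression of the Berwald connection coefficients $G^h_{ij}$ (equivalently, the h-covariant derivatives of the frame) in terms of $I$, nor the commutation identity for the $0$-homogeneous scalar $I$, nor carry out the contraction $m_hm^im^jm^kG^h_{ijk}$; without those structure equations the coefficient could just as plausibly come out as $I_{,2}$ alone or with different signs. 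So part (b) is described, not proved. To close it you would need to derive (or quote from \cite{Matsumoto}) the frame structure equations together with the Ricci-type identity exchanging $\delta_h$ and $\dot\partial_k$ applied to $I$ --- that exchange is precisely what generates the mixed term $I_{,1;2}$ alongside $I_{,2}$ --- or simply cite the lemma wholesale, as the paper does.
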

\begin{definition}\cite{Matsumoto}
A two dimensional space $(M,F)$ is Landsbergian if
$$I_{,1}=0.$$
Also,  $(M,F)$ is Berwaldian if
$$I_{,1}=I_{,2}=0.$$
\end{definition}

\begin{property}\cite{Hashiguchi} \label{T-tensor=0}
A Finsler surface has vanishing T-tensor if and only if $I$ is a point function that is $I=I(x)$ which is equivalent to $I_{;2}=0$.
\end{property}
 For Finsler surfaces, we have the following   theorem.

\begin{theorem}\label{main_theorem}
A Finsler surface $(M,F)$ satisfies the T-condition if and only if $(M,F)$ satisfies  $\sigma T$-condition.
\end{theorem}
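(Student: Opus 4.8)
The plan is to push everything through the two facts that are special to surfaces: the closed formula $T^{h}_{ijk}=\frac{1}{F}\,I_{;2}\,m^{h}m_{i}m_{j}m_{k}$ from Lemma~\ref{Lemma:Matsumoto}(d), and Property~\ref{T-tensor=0}, which says that the $T$-tensor vanishes precisely when the single scalar $I_{;2}$ vanishes on $\T M$. With these, the theorem reduces to the assertion that $(M,F)$ admits a non-constant $\sigma$ with $\sigma_hT^{h}_{ijk}=0$ if and only if $I_{;2}\equiv 0$.

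One implication is immediate: if $I_{;2}\equiv 0$ then $T^{h}_{ijk}\equiv 0$, so $\sigma_hT^{h}_{ijk}\equiv 0$ for \emph{every} $\sigma\in C^{\infty}(M)$, and since $M$ admits non-constant smooth functions the $\sigma T$-condition holds. For the converse I would begin by contracting Lemma~\ref{Lemma:Matsumoto}(d) with $\sigma_h$, obtaining
$$\sigma_hT^{h}_{ijk}=\frac{1}{F}\,I_{;2}\,(\sigma_r m^{r})\,m_im_jm_k .$$
Because $(\ell_i,m_i)$ is a coframe, the tensor $m_im_jm_k$ is nowhere zero, so the $\sigma T$-condition is \emph{equivalent} to the scalar identity $I_{;2}\cdot(\sigma_r m^{r})=0$ on $\T M$. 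Hence everything hinges on showing that the companion factor $\sigma_r m^{r}$ is non-zero on a dense set, so that it may be cancelled.

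To prove that, I would fix a point $x_0$ with $d\sigma(x_0)\neq 0$ (one exists because $\sigma$ is non-constant) and study the $0$-homogeneous function $y\mapsto \sigma_r(x_0)m^{r}(x_0,y)$ on the fibre. Suppose it vanished on a whole open cone $\mathcal{C}$ of directions. The covector $\ell_r(x_0,y)=\paa_rF(x_0,y)$ always satisfies $\ell_r m^{r}=0$ and is nonzero (since $y^{r}\ell_r=F>0$), and in dimension two the annihilator of the nonzero vector $m^{r}(x_0,y)$ is one-dimensional; therefore on $\mathcal{C}$ the covector $\ell_r(x_0,y)$ is proportional to the \emph{fixed} nonzero covector $\sigma_r(x_0)$. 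Writing $P(y):=\sigma_r(x_0)y^{r}$, the Euler relation $y^{r}\ell_r=F$ fixes the proportionality factor and yields $\paa_r\log F=\paa_r\log P$ on $\mathcal{C}$, whence $F(x_0,\cdot)=\kappa P$ for a constant $\kappa$; but then $g_{ij}=\paa_i\paa_j(\tfrac{1}{2}F^{2})=\kappa^{2}\sigma_i(x_0)\sigma_j(x_0)$ would have rank one on $\mathcal{C}$, contradicting the rank-$n$ condition (c) in the definition of a Finsler manifold. So $\{\sigma_r m^{r}\neq 0\}$ is open and dense in $\pi^{-1}(\{d\sigma\neq 0\})$, where the identity above forces $I_{;2}=0$; by continuity $I_{;2}$ vanishes on the closure, and Property~\ref{T-tensor=0} then gives the T-condition.

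The step I expect to be the main obstacle is the final propagation of $I_{;2}=0$ across the locus $\{d\sigma=0\}$, on which the $\sigma T$-condition carries no information. I would treat it using continuity of $I_{;2}$ together with connectedness of $M$, or --- if one works in the real-analytic category, which is natural here since the companion classification is obtained by solving Landsberg's PDE --- simply by noting that in that case $\{d\sigma\neq 0\}$ is already dense. The other ingredients (the contraction, the cancellation of the nowhere-zero coframe factor, and the rank argument on the cone) should be routine.
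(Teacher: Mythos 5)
Your proposal is correct and, at its core, runs along the same lines as the paper's proof: contract Lemma \ref{Lemma:Matsumoto}(d) with $\sigma_h$ to reduce the $\sigma T$-condition to the scalar identity $I_{;2}\,(\sigma_r m^r)=0$, then show that $\sigma_r m^r$ cannot vanish on a fibre-open set over a point where $d\sigma\neq 0$, so that $I_{;2}$ must vanish there, and invoke Property \ref{T-tensor=0}. Where you differ is in the finishing move. The paper pairs $\sigma_r m^r=0$ with $m^r\ell_r=0$ (written as $m^r\paa_r E=0$), deduces $\sigma_1\,\paa_2E-\sigma_2\,\paa_1E=0$, differentiates this in $y^1,y^2$ and uses $\det(g_{ij})\neq 0$ to force $\sigma_1=\sigma_2=0$; you instead integrate the proportionality $\ell_r\propto\sigma_r(x_0)$ over a cone of directions, via the Euler relation, to conclude $F=\kappa P$ with $P$ linear, whence $g_{ij}=\kappa^2\sigma_i\sigma_j$ has rank one. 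The two finishes are equivalent in substance (both are exactly the nondegeneracy of $g$); the paper's differentiation is marginally shorter, your version makes the geometric content (linearity of $F$ on the cone) more visible.

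Concerning the step you flag as the main obstacle: you are right that the contraction carries no information over $\{d\sigma=0\}$, and that your argument by itself only yields $I_{;2}=0$ on the closure of $\pi^{-1}(\{d\sigma\neq 0\})$; continuity plus connectedness of $M$ will not push $I_{;2}=0$ across an open set on which $\sigma$ is locally constant. But note that the paper's proof does not close this point either: it treats the pointwise alternative ``$I_{;2}=0$ or $\sigma_r m^r=0$'' as if one of the two branches held globally, and derives the contradiction ``$\sigma$ constant'' only under the global alternative $\sigma_r m^r\equiv 0$. So your scruple identifies a genuine subtlety of the statement in the smooth category (it is harmless precisely when $\{d\sigma\neq 0\}$ is dense, e.g.\ for non-constant real-analytic $\sigma$ on connected $M$), rather than a defect of your argument relative to the paper's; with that caveat made explicit, your proof is at least as complete as the one in the paper.
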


\begin{proof}
Let $(M,F)$ be a Finsler surface with vanishing T-tensor, that is, the T-condition is satisfied. Then, it is obvious that the $\sigma T$-condition is satisfied.

Conversely, let $(M,F)$ satisfy the $\sigma T$-condition. Then, there is a function $\sigma(x)$ on $M$ such that
$$\sigma_rT^r_{jkh}=0, \quad \sigma_r:=\frac{\pa \sigma}{\pa x^r}.$$
Therefore, by Lemma \ref{Lemma:Matsumoto} (d),  we have
$$ \sigma_rT^r_{jkh }=\frac{I_{;2}}{F} \sigma_r m^rm_jm_km_h=0.$$
Since $m_j\neq 0$, then we must have $I_{;2}=0$ or $\sigma_r m^r=0$. If $I_{;2}=0$, then the T-tensor vanishes and we are done. Now, if $\sigma_r m^r=0$, then we have
$$\sigma_1m^1+\sigma_2m^2=0,  \quad  m^1  \paa_1E+m^2 \paa_2E=0,$$
where $E=\frac{F^2}{2}$.
The above two equations can be seen as algebraic equations at every point of $\T M$.  Since $m^1$ and $m^2$ are non-zero at each point of $\T M$, then we must have
$$\sigma_1\ \paa_2E-\sigma_2\ \paa_1E=0.$$
Differentiating the above equation with respect to $y^1$ and $y^2$ respectively, we have
\begin{eqnarray*}
 \sigma_1\ g_{12}-\sigma_2\ g_{11}&=&0,\\
  \sigma_1\ g_{22}-\sigma_2\ g_{12}&=&0.
\end{eqnarray*}
Since $\det(g_{ij})=g_{12}^2-g_{11} g_{22}\neq 0$, then we must have $\sigma_1=\sigma_2=0$ at each point of $\T M$. This implies that $\sigma$ is constant which is a contradiction.
 Hence, since $\sigma(x)$ is not constant, then $I_{;2}=0$ and this means that the T-tensor vanishes. This completes the proof.
\end{proof}

\subsection{Conformal Change}

 Now, we consider the conformal change of  a  Finsler metric $F$, namely,

  \begin{equation}\label{conformal_F}
  \overline{F}=e^{\sigma(x)}F,
  \end{equation}
  where $\sigma(x)$ is a smooth function on $M$.

   It should be noted that all geometric objects associated with the transformed space $(M,\overline{F})$ will be elaborated by barred symbols.

\begin{lemma}\cite{Elgendi-LBp}
Under the conformal change \eqref{conformal_F}, the Berwald tensor  transforms as follows
$$\overline{G}^i_{jkh}=G^i_{jkh}+B^i_{jkh},$$
where
  \begin{align}\label{B-T-tensor}
% \nonumber to remove numbering (before each equation)
\nonumber   B^i_{jkh}=& F\sigma_r\dot{\partial}_hT^{ri}_{jk}+\sigma_r(T^{ri}_{jh} \ell_k+T^{ri}_{kh}\ell_j+T^{ri}_{jk}\ell_h-T^r_{jkh}\ell^i-T^i_{jkh}\ell^r)\\
\nonumber   &-F\sigma_r(T^i_{sjh}C^{sr}_{k}+T^r_{skh}C^{si}_{j}+T^r_{sjh}C^{si}_{k}+T^i_{skh}C^{sr}_{j}-T^{ri}_{sh}C^s_{jk}-T^s_{jkh}C^{ri}_{s})\\
   &+\sigma_r(C^{ri}_{j}h_{kh}+C^{ri}_{k}h_{jh}+2 C^{ir}_hh_{jk}-C^r_{jk}h^i_h-C^i_{jk}h^r_h-2 C_{jkh}h^{ir})    \\
 \nonumber  &+F^2\sigma_r[C^{t}_{hj}S^{\,\, ir}_{t\quad k}+C^{t}_{hk}S^{\,\, ri}_{t\quad j}-C^{ti}_{h}S^{\quad \, r}_{tjk}-C^{tr}_{h}S^{\quad \, i }_{tkj}-C^{ti}_{j}S^{\quad \, r}_{thk}-C^{tr}_{k}S^{ \quad \, i}_{th j}
   ],
\end{align}
where $S^{\,\, h}_{i\,\, jk}=C^{r}_{ik}C^{h}_{rj}-C^{r}_{ij}C^{h}_{rk}$ is the v-curvature of Cartan connection.
\end{lemma}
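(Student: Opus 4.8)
\textit{Proof (plan).} The plan is to reduce the entire statement to a purely vertical (i.e.\ $\paa$-) differentiation, starting from the conformal behaviour of the energy $E=\tfrac12F^2$. First I would record the elementary transformation laws. Since $\overline F=e^{\sigma}F$ with $\sigma=\sigma(x)$ and $\sigma$ independent of $y$, we have $\overline E=e^{2\sigma}E$ and
$$\overline g_{ij}=\paa_i\paa_j\overline E=e^{2\sigma}g_{ij},\qquad \overline g^{ij}=e^{-2\sigma}g^{ij},\qquad \overline C_{ijk}=e^{2\sigma}C_{ijk}.$$
Consequently the mixed Cartan tensor $\overline C^h_{ij}=C^h_{ij}$ is conformally invariant, and likewise $\overline C^{ri}_{jk}=C^{ri}_{jk}$, $\overline S^{\,\,h}_{i\,\,jk}=S^{\,\,h}_{i\,\,jk}$ and $\overline{\,T^{ri}_{jk}}=T^{ri}_{jk}$, each raised index cancelling one factor $e^{2\sigma}$. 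These invariances are exactly what allow the right-hand side of \eqref{B-T-tensor} to be written entirely with unbarred objects.

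Next I would compute the change in the spray using the intrinsic formula $2G^i=g^{i\ell}\big(y^k\tfrac{\pa}{\pa x^k}\paa_\ell E-\tfrac{\pa}{\pa x^\ell}E\big)$. Substituting $\overline E=e^{2\sigma}E$, the factor $e^{-2\sigma}$ from $\overline g^{i\ell}$ cancels the $e^{2\sigma}$ produced by differentiating $\overline E$, and after using the homogeneity identity $g^{i\ell}\paa_\ell E=y^i$ one is left with the clean relation
$$\overline G^i=G^i+\sigma_0\,y^i-E\,\sigma^i,\qquad \sigma_0:=\sigma_r y^r,\quad \sigma^i:=g^{i\ell}\sigma_\ell.$$
No Cartan terms appear at this stage; all of the complexity of \eqref{B-T-tensor} will enter only through subsequent vertical differentiation. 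Since $G^i_{jk}=\paa_j\paa_k G^i$ and $G^i_{jkh}=\paa_j\paa_k\paa_h G^i$, and $\paa$ is linear, the difference $B^i_{jkh}=\overline G^i_{jkh}-G^i_{jkh}$ equals $\paa_j\paa_k\paa_h(\sigma_0 y^i-E\sigma^i)$. The part $\sigma_0 y^i=\sigma_r y^r y^i$ is quadratic in $y$, so its third vertical derivative vanishes, and the whole problem collapses to
$$B^i_{jkh}=-\,\sigma_\ell\,\paa_j\paa_k\paa_h\big(E\,g^{i\ell}\big),$$
the constant-in-$y$ factor $\sigma_\ell$ having been pulled out.

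It then remains to expand $\paa_j\paa_k\paa_h(Eg^{i\ell})$ and reorganise it into the seven groups of \eqref{B-T-tensor}. I would differentiate step by step using the basic rules $\paa_h E=y_h$ (with $y_h:=g_{hm}y^m=F\ell_h$), $\paa_k y_h=g_{kh}$, $\paa_h g^{i\ell}=-2C^{i\ell}_h$, and $\paa_h C_{mnp}=C_{mnph}$, together with the homogeneity relations $C_{ijk}y^k=0$ and $y^i\ell_i=F$. A Leibniz expansion produces terms of six homogeneity-balanced types, all $(-1)$-homogeneous: four-index Cartan and five-index Cartan terms multiplied by $E$, which, upon writing $2E=F^2$, reassemble with lower Cartan products into $F$ times the T-tensor \eqref{T-tensor} and its vertical derivative $F\paa_h T^{ri}_{jk}$; products of two Cartan tensors times $E=\tfrac12F^2$, whose \emph{antisymmetric} combinations are precisely the v-curvature pieces $F^2CS$ while the symmetric ones fill out the $FC\,C$ part of the T-tensor; and terms carrying explicit $y_h$ or $g_{kh}$ which, after $y_h=F\ell_h$ and $g_{kh}=h_{kh}+\ell_k\ell_h$ (with $h_{ij}:=g_{ij}-\ell_i\ell_j$ the angular metric), supply the $\ell$-weighted and $h$-weighted groups. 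Degree-of-homogeneity counting ($B^i_{jkh}$ must be $(-1)$-homogeneous) serves throughout as a check that the expansion is complete.

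The hardest step will be this final regrouping: the raw third derivative yields a large number of Cartan and Cartan-squared terms with assorted index placements, and the task is to match them, with the exact coefficients and symmetrisations of \eqref{B-T-tensor}, against the definition \eqref{T-tensor}, its vertical derivative, and the v-curvature $S^{\,\,h}_{i\,\,jk}=C^r_{ik}C^h_{rj}-C^r_{ij}C^h_{rk}$. Recognising which two-Cartan products assemble into the \emph{antisymmetric} v-curvature rather than into the symmetric products already absorbed in $T$ is the crux; the remainder is careful but routine contraction and relabelling. $\qed$
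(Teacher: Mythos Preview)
The paper does not actually prove this lemma: it is quoted verbatim from the reference \cite{Elgendi-LBp} and stated without argument, so there is no ``paper's own proof'' to compare against beyond the citation itself.

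Your plan is the standard and correct route to the result. The reduction $\overline{G}^i=G^i+\sigma_0y^i-E\sigma^i$ is exactly the well-known conformal change of the geodesic spray (it follows, as you say, from $\overline E=e^{2\sigma}E$, $\overline g^{ij}=e^{-2\sigma}g^{ij}$, and $g^{i\ell}\paa_\ell E=y^i$), and the observation that the quadratic part $\sigma_0y^i$ drops out after three vertical derivatives, leaving $B^i_{jkh}=-\sigma_\ell\,\paa_j\paa_k\paa_h(Eg^{i\ell})$, is the key simplification. From there the computation really is just a Leibniz expansion using $\paa_hE=y_h$, $\paa_hg^{i\ell}=-2C^{i\ell}_h$, and $\paa_hC_{ijk}=C_{ijkh}$, followed by the substitutions $y_h=F\ell_h$ and $g_{kh}=h_{kh}+\ell_k\ell_h$ and recognition of the $T$-tensor \eqref{T-tensor} and $S^{\,\,h}_{i\,\,jk}$ in the debris. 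Your identification of the crux --- separating the antisymmetric two-Cartan products that form $S$ from the symmetric ones already absorbed into $T$ --- is accurate; that bookkeeping is genuinely tedious but contains no hidden obstacle. This is essentially how the cited paper \cite{Elgendi-LBp} obtains the formula as well.
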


\begin{lemma}\cite{Elgendi-LBp}
Under the conformal change \eqref{conformal_F}, the Landsberg tensor has the following transformation
\begin{equation}\label{confrmal_L_tensor}
   \overline{L}_{jkh}=e^{2\sigma} L_{jkh}+e^{2\sigma} F\sigma_rT^r_{jkh }.
   \end{equation}
\end{lemma}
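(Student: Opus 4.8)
The plan is to deduce the transformation of the Landsberg tensor directly from that of the Berwald tensor recorded in \eqref{B-T-tensor}. Since $\sigma=\sigma(x)$ depends on position only, $\overline{E}=\tfrac12\overline{F}^2=e^{2\sigma}E$ forces $\overline{g}_{ij}=e^{2\sigma}g_{ij}$, hence $\overline{C}_{ijk}=e^{2\sigma}C_{ijk}$ and $\overline{\ell}_i=\paa_i\overline{F}=e^{\sigma}\ell_i$. Applying the defining identity $L_{jkh}=-\tfrac12 F\,G^i_{jkh}\,\ell_i$ to the barred metric and inserting $\overline{G}^i_{jkh}=G^i_{jkh}+B^i_{jkh}$ from the preceding lemma yields
$$
\overline{L}_{jkh}=-\tfrac12\,\overline{F}\,\overline{G}^i_{jkh}\,\overline{\ell}_i
=-\tfrac12\,e^{2\sigma}F\big(G^i_{jkh}+B^i_{jkh}\big)\ell_i
=e^{2\sigma}L_{jkh}-\tfrac12\,e^{2\sigma}F\,B^i_{jkh}\ell_i .
$$
Thus the whole statement reduces to the single algebraic identity $B^i_{jkh}\ell_i=-2\,\sigma_r T^r_{jkh}$, which upon substitution reproduces \eqref{confrmal_L_tensor} exactly.

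To prove $B^i_{jkh}\ell_i=-2\sigma_r T^r_{jkh}$ I would contract \eqref{B-T-tensor} with $\ell_i$ using four elementary facts: $\ell^iC_{ijk}=0$ together with its raised-index forms $\ell_iC^{ri}_j=\ell_iC^{ir}_h=\ell_iC^i_{jk}=0$; the relation $\paa_h\ell_i=\tfrac1F h_{hi}$, with $h_{ij}=g_{ij}-\ell_i\ell_j$ the angular metric, whence $\ell_ih^i_j=\ell_ih^{ir}=0$; the homogeneity identity $\ell^i\ell_i=1$; and -- the only fact needing a short verification -- that the $T$-tensor is trace-free along $\ell$, i.e.\ $\ell^rT_{rjkh}=0$, equivalently $\ell_iT^{ri}_{jk}=0$ and $\ell_iT^i_{jkh}=0$. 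The last fact follows by contracting \eqref{T-tensor} with $\ell^h$: differentiating $\ell^hC_{hij}=0$ in $y^k$ and using $\paa_k\ell^h=\tfrac1F h^h_k$ gives $F\,C_{hijk}\ell^h=-C_{ijk}$, which cancels the term $C_{ijk}\ell_h\ell^h=C_{ijk}$, while every remaining term of \eqref{T-tensor} carries a Cartan factor contracted against $\ell$ and so vanishes.

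With these identities in hand the contraction is pure bookkeeping. In the third, fourth and fifth lines of \eqref{B-T-tensor} every monomial carries the free index $i$ either on a Cartan slot, on an angular-metric slot, or on a $T$-slot that becomes $\ell^rT_{r\cdots}$ (for the $v$-curvature terms the index enters a product of two Cartan tensors), so each of these three lines vanishes after contraction with $\ell_i$. In the second line only $-T^r_{jkh}\ell^i\ell_i=-T^r_{jkh}$ survives, contributing $-\sigma_rT^r_{jkh}$. In the first line $\ell_i\,\paa_hT^{ri}_{jk}=\paa_h(\ell_iT^{ri}_{jk})-(\paa_h\ell_i)T^{ri}_{jk}=-\tfrac1F h_{hi}T^{ri}_{jk}=-\tfrac1F T^r_{jkh}$, contributing another $-\sigma_rT^r_{jkh}$. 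Summing gives $B^i_{jkh}\ell_i=-2\sigma_rT^r_{jkh}$, and substituting this into the first display completes the proof.

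The main obstacle is entirely computational: keeping the index placements straight in the Cartan and $v$-curvature ($S$-tensor) terms of \eqref{B-T-tensor} while checking that they die under the $\ell_i$-contraction; conceptually there is nothing beyond $\ell^iC_{ijk}=0$ and its consequence $\ell^rT_{rjkh}=0$. A self-contained alternative would avoid \eqref{B-T-tensor} altogether by transforming the spray coefficients, $\overline{G}^i=G^i+(\text{terms built from }\sigma)$, differentiating three times in $y$, and contracting with $\overline{\ell}_i$; this is tidier in spirit but requires the conformal transformation law of $G^i$ as a separate input, so the route through the Berwald-tensor lemma is the most economical one here.
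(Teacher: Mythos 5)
Your derivation is correct: the paper itself gives no proof of this lemma (it is quoted from \cite{Elgendi-LBp}), and your route --- applying $L_{jkh}=-\tfrac12 F\,G^i_{jkh}\ell_i$ to the barred metric, inserting $\overline{G}^i_{jkh}=G^i_{jkh}+B^i_{jkh}$, and reducing everything to the contraction identity $B^i_{jkh}\ell_i=-2\sigma_rT^r_{jkh}$ --- is exactly the natural companion of the cited Berwald-tensor lemma and checks out term by term (the facts $\ell^iC_{ijk}=0$, $\ell^hT_{hijk}=0$, $\paa_h\ell_i=\tfrac1F h_{hi}$, $\ell_ih^i_h=\ell_ih^{ir}=0$ do kill lines three through five of \eqref{B-T-tensor}, while lines one and two each contribute $-\sigma_rT^r_{jkh}$). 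In particular your verification that the $T$-tensor is trace-free along $\ell$, via $FC_{hijk}\ell^h=-C_{ijk}$, is the one nontrivial ingredient and is stated correctly, so the proposal is a sound, self-contained proof of \eqref{confrmal_L_tensor} in the spirit of the reference.
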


\begin{remark}
 In  1976, Hashiguchi \cite{Hashiguchi}  showed   that a  Landsberg space remains  Landsberg by \textit{every} conformal change if and only if the T-tensor vanishes. However,   there are     Landsberg spaces (with non vanishing  T-tensor) which remain Landsberg under \textit{some}   conformal transformation, see \cite{Elgendi-LBp, Elgendi-solutions}. But there is no a  Landsberg surface $(M,F)$ with non-vanishing T-tensor which remains  Landsberg under a conformal transformation, as be shown in the following theorem.   
\end{remark}

\begin{theorem}\label{Th:3.10}
The non homothetic conformal transformation of a   Landsberg surface $(M,F)$ is Landsbergian if and only if the T-tensor of $(M,F)$ vanishes.
\end{theorem}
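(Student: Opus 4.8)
The plan is to read the statement directly off the conformal transformation law \eqref{confrmal_L_tensor} for the Landsberg tensor, with Theorem \ref{main_theorem} supplying the essential input. Recall that the conformal change \eqref{conformal_F} is homothetic precisely when $\sigma(x)$ is constant, so throughout the converse direction the standing assumption is that $\sigma(x)$ is non-constant.

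First I would dispatch the ``if'' direction. Suppose the T-tensor of $(M,F)$ vanishes. Since $(M,F)$ is Landsberg we have $L_{jkh}=0$, and substituting both $L_{jkh}=0$ and $T^r_{jkh}=0$ into \eqref{confrmal_L_tensor} yields $\overline{L}_{jkh}=e^{2\sigma}L_{jkh}+e^{2\sigma}F\sigma_rT^r_{jkh}=0$. Hence $(M,\overline{F})$ is again Landsbergian; in fact this conclusion holds for \emph{every} conformal factor, not only the non-homothetic ones, so the hypothesis on $\sigma$ is not even needed here.

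For the converse, assume $\sigma(x)$ is not constant and that $(M,\overline{F})$ is Landsbergian, i.e.\ $\overline{L}_{jkh}=0$. Using once more $L_{jkh}=0$ for the Landsberg surface $(M,F)$, equation \eqref{confrmal_L_tensor} collapses to $e^{2\sigma}F\,\sigma_rT^r_{jkh}=0$, and since $e^{2\sigma}F>0$ on $\T M$ this forces $\sigma_rT^r_{jkh}=0$. Because $\sigma$ is a non-constant function on $M$, this is exactly the $\sigma T$-condition for $(M,F)$; by Theorem \ref{main_theorem} the surface then satisfies the T-condition, so its T-tensor vanishes, which is what we wanted.

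The only genuine content of the argument is the appeal to Theorem \ref{main_theorem}: the formal manipulation of \eqref{confrmal_L_tensor} by itself delivers just the $\sigma T$-condition, which in dimension $n\ge 3$ is strictly weaker than the T-condition (compare Examples \ref{Example-1} and \ref{Example-2}). Thus the two-dimensionality is indispensable and cannot be bypassed at the level of the transformation formula alone; everything else is a one-line substitution.
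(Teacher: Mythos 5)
Your proposal is correct and follows essentially the same route as the paper: substitute $L_{jkh}=0$ into the conformal transformation law \eqref{confrmal_L_tensor}, deduce $\sigma_rT^r_{jkh}=0$ from $\overline{L}_{jkh}=0$, and invoke Theorem \ref{main_theorem} (with the non-homothetic hypothesis ensuring $\sigma$ is non-constant) to pass from the $\sigma T$-condition to the vanishing of the T-tensor, the converse being immediate. Your added remarks — that the ``if'' direction needs no hypothesis on $\sigma$ and that two-dimensionality is what makes the final step work — are accurate but not a different method.
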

\begin{proof}
Let $(M,F)$ be a Landsberg surface, then $L_{ijk}=0$. Now by \eqref{confrmal_L_tensor}, we have
$$\overline{L}_{jkh}=  e^{2\sigma} F\sigma_rT^r_{jkh }.$$
Assume that $(M,\overline{F})$ is Landsbergian, then we have
$$\overline{L}_{jkh}=  e^{2\sigma} F\sigma_rT^r_{jkh }= 0.$$
That is, $\sigma_rT^r_{jkh }= 0.$
Using Theorem \ref{main_theorem}, we conclude that $T^h_{ijk}=0$.

Conversely, assume that $T^h_{ijk}=0$, then by \eqref{confrmal_L_tensor} we have
$$\overline{L}_{jkh}= e^{2\sigma} L_{jkh}.$$
Consequently, the result follows.
\end{proof}

  In \cite{Matsumoto}, B\'{a}cs\'{o} and Matsumoto   proved that a Landsberg surface that satisfies the T-condition is Berwaldain. Making use of Theorem \ref{main_theorem}, we have the following generalized version of B\'{a}cs\'{o} and Matsumoto's result.
  \begin{theorem}
  	A Landsberg surface satisfying the $\sigma T$-condition is Berwaldian.
  \end{theorem}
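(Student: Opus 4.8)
The plan is to chain together the two tools already in hand: Theorem \ref{main_theorem}, which identifies the $\sigma T$-condition with the honest T-condition on a surface, and the classical B\'acs\'o--Matsumoto fact that a Landsberg surface satisfying the T-condition is Berwaldian. So the proof is essentially a one-line reduction.

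More concretely, let $(M,F)$ be a Landsberg surface satisfying the $\sigma T$-condition. By Lemma \ref{Lemma:Matsumoto}(c), being Landsbergian means $I_{,1}=0$. By Theorem \ref{main_theorem}, the $\sigma T$-condition forces the full T-condition, i.e.\ $T^h_{ijk}=0$, which by Lemma \ref{Lemma:Matsumoto}(d) (or Property \ref{T-tensor=0}) is equivalent to $I_{;2}=0$. It then remains to deduce $I_{,2}=0$. For this I would invoke the B\'acs\'o--Matsumoto argument directly, or reconstruct it from the frame formulas: on a Landsberg surface the Berwald tensor of Lemma \ref{Lemma:Matsumoto}(b) reduces to $G^h_{ijk}=\frac{1}{F}I_{,2}\,m^h m_i m_j m_k$ (since $I_{,1}=0$ kills both the $\ell^h$ term and the $I_{,1;2}$ term), so Berwaldianness is exactly $I_{,2}=0$. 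The vanishing of $I_{,2}$ then follows from the known commutation/integrability relation between the two covariant-type derivatives $I_{,2}$ and $I_{;2}$ on a Landsberg surface — this is precisely the content of B\'acs\'o--Matsumoto's theorem that I am entitled to cite.

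The one genuine step, as opposed to bookkeeping, is the passage from $I_{,1}=0$ and $I_{;2}=0$ to $I_{,2}=0$; everything else is substitution into the frame formulas of Lemma \ref{Lemma:Matsumoto}. Since the paper explicitly states ``In \cite{Matsumoto}, B\'acs\'o and Matsumoto proved that a Landsberg surface that satisfies the T-condition is Berwaldian,'' I would simply apply that statement to the metric produced by the reduction above rather than redo its proof. Thus the structure is: (i) $\sigma T$-condition $\Rightarrow$ T-condition, by Theorem \ref{main_theorem}; (ii) Landsberg $+$ T-condition $\Rightarrow$ Berwald, by B\'acs\'o--Matsumoto. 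No obstacle is expected beyond correctly citing the prior results.

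\begin{proof}
Let $(M,F)$ be a Landsberg surface satisfying the $\sigma T$-condition. By Theorem \ref{main_theorem}, the $\sigma T$-condition implies that the T-tensor of $(M,F)$ vanishes, so $(M,F)$ is a Landsberg surface satisfying the T-condition. By the result of B\'acs\'o and Matsumoto \cite{Matsumoto}, such a surface is Berwaldian. Equivalently, in terms of the main scalar: being Landsbergian gives $I_{,1}=0$ (Lemma \ref{Lemma:Matsumoto}(c)), while the vanishing of the T-tensor gives $I_{;2}=0$ (Property \ref{T-tensor=0}); together these force $I_{,2}=0$, and hence by Lemma \ref{Lemma:Matsumoto}(b) the Berwald tensor $G^h_{ijk}$ vanishes identically. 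Therefore $(M,F)$ is Berwaldian.
\end{proof}
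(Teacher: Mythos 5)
Your proof is correct and follows essentially the same route as the paper: apply Theorem \ref{main_theorem} to upgrade the $\sigma T$-condition to the T-condition, then invoke the B\'acs\'o--Matsumoto theorem that a Landsberg surface with vanishing T-tensor is Berwaldian. The extra sketch in terms of the main scalar is consistent with that cited result and does not change the argument.
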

\begin{proof}
	Let $(M,F)$ be a Landsberg surface and satisfies the  $\sigma T$-condition. Then by Theorem \ref{main_theorem}, the $T$-condition is satisfied. Hence, by \cite[Theorem 2]{Matsumoto},  $(M,F)$ is Berwaldian.
\end{proof}

Now, let's  request the conformal transformation to preserve the property of being Berwaldian, so we have the  following theorem.
\begin{theorem}
The non-homothetic conformal transformation of a   Berwald surface $(M,F)$ is Berwaldian if and only if   $(M,F)$ satisfies the $\sigma T$-condition.
\end{theorem}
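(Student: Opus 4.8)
The plan is to treat the two implications separately. For necessity I would run the conformal transformation law \eqref{confrmal_L_tensor} for the Landsberg tensor; for sufficiency I would combine Property \ref{T-tensor=0}, the Berwald condition in the form $I_{,1}=I_{,2}=0$, and the conformal behaviour of the main scalar. Throughout, write the change as $\overline{F}=e^{\sigma}F$ with $\sigma$ non-constant, which is exactly the meaning of ``non-homothetic''.

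For necessity, suppose $(M,\overline{F})$ is Berwaldian. Since a Berwald space is Landsbergian, $L_{jkh}=0$ and $\overline{L}_{jkh}=0$, so \eqref{confrmal_L_tensor} gives $e^{2\sigma}F\,\sigma_{r}T^{r}_{jkh}=0$, hence $\sigma_{r}T^{r}_{jkh}=0$; as $\sigma$ is non-constant this says precisely that $(M,F)$ satisfies the $\sigma T$-condition. (By Theorem \ref{main_theorem} this is equivalent to the vanishing of the $T$-tensor of $(M,F)$, and the argument here is the one already used for Theorem \ref{Th:3.10}.)

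For sufficiency, suppose $(M,F)$ satisfies the $\sigma T$-condition. By Theorem \ref{main_theorem} the $T$-tensor vanishes, so by Property \ref{T-tensor=0} the main scalar is a point function, $I=I(x)$. Since $(M,F)$ is Berwald we also have $I_{,1}=I_{,2}=0$, i.e.\ $I_{|i}=I_{,1}\ell_{i}+I_{,2}m_{i}=0$; but for a function of $x$ alone $I_{|i}=\delta_{i}I=\partial_{i}I$, so $\partial_{i}I=0$ and $I$ is a constant. A conformal change merely rescales the fundamental tensor, $\overline{g}_{ij}=e^{2\sigma}g_{ij}$, whence $\overline{C}_{ijk}=e^{2\sigma}C_{ijk}$, while $\overline{\ell}_{i}=e^{\sigma}\ell_{i}$ and $\overline{m}_{i}=e^{\sigma}m_{i}$; comparing with $\overline{C}_{ijk}=\frac{\overline{I}}{\overline{F}}\,\overline{m}_{i}\overline{m}_{j}\overline{m}_{k}$ yields $\overline{I}=I$, again a constant. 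Therefore $\overline{I}_{|i}=\overline{I}_{,1}\overline{\ell}_{i}+\overline{I}_{,2}\overline{m}_{i}=0$, so $\overline{I}_{,1}=\overline{I}_{,2}=0$, and by Lemma \ref{Lemma:Matsumoto}(b) the Berwald tensor $\overline{G}^{h}_{ijk}$ vanishes; thus $(M,\overline{F})$ is Berwaldian --- and this holds for every conformal change, not only a non-homothetic one.

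I expect the delicate points to lie entirely in the sufficiency direction: justifying $I=\text{const}$, which genuinely needs both hypotheses (vanishing $T$-tensor through Property \ref{T-tensor=0}, and the Berwald condition through $I_{|i}=0$) together with the identity $I_{|i}=\partial_{i}I$ for a point function; and pinning down $\overline{I}=I$ from $\overline{g}_{ij}=e^{2\sigma}g_{ij}$. A longer alternative for sufficiency is to verify directly that $B^{i}_{jkh}=0$ in \eqref{B-T-tensor}: once $T\equiv 0$ all $T$-terms drop, the $v$-curvature terms vanish because on a surface $S^{\,\,h}_{i\,\,jk}=C^{r}_{ik}C^{h}_{rj}-C^{r}_{ij}C^{h}_{rk}=0$, and with $C_{ijk}=\frac{I}{F}m_{i}m_{j}m_{k}$ and $h_{ij}=\varepsilon m_{i}m_{j}$ each of the six Cartan/angular-metric terms in \eqref{B-T-tensor} equals $\frac{\varepsilon I}{F}\sigma_{r}m^{r}m^{i}m_{j}m_{k}m_{h}$, so they cancel since $1+1+2-1-1-2=0$; hence $\overline{G}^{i}_{jkh}=G^{i}_{jkh}=0$ as before.
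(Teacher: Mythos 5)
Your proposal is correct, and it reaches the theorem by a route genuinely different from the paper's. The paper works entirely with the conformal transformation \eqref{B-T-tensor} of the Berwald tensor: on a surface the v-curvature terms vanish, and rewriting everything in the Berwald frame reduces the difference tensor $B^i_{jkh}$ to a combination of $m^im_jm_km_h$ and $\ell^im^rm_hm_jm_k$ whose coefficients are proportional to $I_{;2}$ (and $I_{;2;2}$); demanding $\overline{G}^i_{jkh}=G^i_{jkh}=0$ then forces $I_{;2}\,\sigma_rm^r=0$, which by Lemma \ref{Lemma:Matsumoto}(d) is exactly $\sigma_rT^r_{jkh}=0$, and conversely $I_{;2}=0$ annihilates $B^i_{jkh}$, so the one computation delivers both implications (the paper only spells out the ``only if'' part). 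You instead split the equivalence: for necessity you use the Landsberg transformation law \eqref{confrmal_L_tensor} together with the fact that a Berwald metric is Landsbergian, which is shorter than the frame computation and in fact needs only that $\overline{F}$ is Landsberg; for sufficiency you pass through Theorem \ref{main_theorem} and Property \ref{T-tensor=0} to get $I=I(x)$, use the Berwald hypothesis $I_{|i}=\partial_iI=0$ to conclude that $I$ is an absolute constant, and then invoke conformal invariance of the main scalar ($\overline{g}_{ij}=e^{2\sigma}g_{ij}$, $\overline{C}_{ijk}=e^{2\sigma}C_{ijk}$, $\overline{m}_i=\pm e^{\sigma}m_i$) to obtain a constant $\overline{I}$ and hence $\overline{G}^h_{ijk}=0$ by Lemma \ref{Lemma:Matsumoto}(b); your alternative closing argument (with $T\equiv0$ all $T$-terms in \eqref{B-T-tensor} drop, the $S$-terms vanish in dimension two, and the six Cartan/angular terms cancel since $1+1+2-1-1-2=0$) is essentially the paper's sufficiency computation made explicit. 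The only caveat is the sign ambiguity in the choice of $\overline{m}_i$, which gives $\overline{I}=\pm I$ rather than $\overline{I}=I$; since either way $\overline{I}$ is constant, your conclusion is unaffected, and in fact your write-up is more complete than the paper's on the sufficiency direction.
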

\begin{proof}
	By \cite{Matsumoto}, we have
	 $$F\paa_jm^i=-(\ell^i+\varepsilon I m^i)m_j, \quad F\paa_jm_i=-(\ell_i-\varepsilon I m)i)m_j.$$
Now, in terms of Berwald frame and making use of \eqref{B-T-tensor}, we get
\begin{align*}
\paa_h T^{ri}_{jk}&= \paa_h\left( \frac{I_{;2}}{F}m^rm^im_jm_k\right)\\
&=\frac{\paa_h I_{;2}}{F}m^rm^im_jm_k-\frac{I_{;2}}{F^2}m^rm^im_jm_k\ell_h-\frac{I_{;2}}{F^2}(\ell^r+\varepsilon I m^r)m_hm^im_jm_k\\
&-\frac{I_{;2}}{F^2}(\ell^i+\varepsilon I m^i)m_hm^rm_jm_k-\frac{I_{;2}}{F^2}m^rm^i(\ell_j-\varepsilon I m_j)m_hm_k-\frac{I_{;2}}{F^2}m^rm^i(\ell_k-\varepsilon I m_k)m_hm_j
\end{align*}
Then, we  under the conformal transformation \eqref{conformal_F} and keeping in mind that the components $S^h_{ijk}$ of the v-curvature of any surfaces vanish, the Berwald tensor  transforms as follows
$$\overline{G}^i_{jkh}=G^i_{jkh}+B^i_{jkh},$$
where
 \begin{align*}
   B^i_{jkh}&= ( \paa_h I_{;2})\sigma_rm^rm^im_jm_k-\frac{I_{;2}}{F}\sigma_rm^rm^im_jm_k\ell_h-\frac{I_{;2}}{F }\sigma_r(\ell^r+\varepsilon I m^r)m_hm^im_jm_k\\
&-\frac{I_{;2}}{F }\sigma_r(\ell^i+\varepsilon I m^i)m_hm^rm_jm_k-\frac{I_{;2}}{F }\sigma_rm^rm^i(\ell_j-\varepsilon I m_j)m_hm_k\\
   &-\frac{I_{;2}}{F }\sigma_rm^rm^i(\ell_k-\varepsilon I m_k)m_hm_j+\frac{I_{;2}}{F }\sigma_r(m^rm^im_jm_h \ell_k+m^rm^im_km_h\ell_j\\
   &+m^rm^im_jm_k\ell_h-m^rm_km_jm_h\ell^i- m^im_jm_hm_k\ell^r)-\frac{2\varepsilon I I_{;2}}{F }\sigma_rm^r m^im_jm_h m_k\\
   &= (\paa_h I_{;2})\sigma_rm^rm^im_jm_k- \frac{2I_{;2}}{F }\sigma_r\ell^rm_hm^im_jm_k
-\frac{2I_{;2}}{F }\sigma_r\ell^im_hm^rm_jm_k
\\&-\frac{2\varepsilon I I_{;2}}{F }\sigma_rm^r m^im_jm_h m_k.\\
  \end{align*}
  Since $I_{;2}$ is homogeneous of degree $0$, then by Property  \ref{property}, we  have
  $$F\paa_h I_{;2}=I_{;2;1}\ell_h+I_{;2;2}m_h=I_{;2;2}m_h$$ then  $B^i_{jkh}$ can be written as follows
  \begin{align*}
   B^i_{jkh}&=  \left( I_{;2;2}\sigma_rm^r- \frac{2I_{;2}}{F }\sigma_r\ell^r
-\frac{2\varepsilon I I_{;2}}{F }\sigma_rm^r\right)m^im_jm_h m_k-\frac{2I_{;2}}{F }\sigma_r\ell^im_hm^rm_jm_k.
  \end{align*}
  Assuming that  $(M,F)$ and $(M,\overline{F})$ are both Berwaldian, then the difference tensor $ B^i_{jkh}$ vanishes identically. So, we have  $ B^i_{jkh}=0$ and since $m^i$ and $\ell^i$ are independent, then we must have $I_{;2}\sigma_r m^r$. Hence, $I_{;2}=0$ or $\sigma_r m^r=0$ and consequently, by Lemma \ref{Lemma:Matsumoto} (d), the $\sigma T$-condition is satisfied.
\end{proof}

\section{Finsler surfaces satisfying   the T-condition}

To find explicit formulae of the Finsler surfaces that satisfy the T-condition (with vanishing T-tensor), we recall       the following new look of Finsler surfaces \cite{New_surfaces}. 
\begin{lemma}[\cite{New_surfaces}]\label{Lemma_G1_G2}
   Let $F$ be a Finsler function on a two-dimensional manifold $M$, then $F$ can be  written  in the form
\begin{equation}\label{Finsler_Function}
  F=\left\{
\begin{array}{ll}
   \abs{y^1}f(x,\varepsilon u),      & \quad u=\frac{y^2}{y^1}, \   y^1\neq 0, \ \varepsilon:=\operatorname{sgn}(y^1) \\
      0 , & \quad  y^1=y^2=0\\
      \abs{y^2} f(x,{\epsilon} v),   &\quad v=\frac{y^1}{y^2}, \    y^2\neq 0,\ {\epsilon}:=\operatorname{sgn}(y^2) \\
\end{array} 
\right.
\end{equation}
where $f(x,\varepsilon u):=F(x,\varepsilon ,\varepsilon u)$ is a positive smooth function on $M\times\Real$   and $|\cdot |$ is the absolute value.

Moreover, for the expression $ F=\abs{y^1}f(x,\varepsilon u)$  the  coefficients $G^1$ and $G^2$ of the geodesic spray are  given by
\begin{equation}\label{G^1_G^2}
  G^1=f_1(x,u)(y^1)^2, \quad  G^2=f_2(x,u)(y^1)^2,
\end{equation}
where  the functions $f_1$ and $f_2$ are smooth functions on $M\times\Real$ and  given as follows
\begin{equation}\label{G_f1}
f_1=\frac{(\pa_1f+u\pa_2f)f''-(\pa_1f'+u\pa_2f'-\pa_2f)f'}{2ff''},
\end{equation}
\begin{equation}\label{G_f2}
f_2=\frac{u(\pa_1f+u\pa_2f)f''+(\pa_1f'+u\pa_2f'-\pa_2f)( f- uf')}{2ff''},
\end{equation}
where    $f'$ (resp. $f''$) is the first (resp. the second) derivative of  $f$  with respect to $ u$ and so on.
\end{lemma}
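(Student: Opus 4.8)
\noindent The lemma has two independent parts: the slope representation \eqref{Finsler_Function} of an arbitrary Finsler surface, and the formulas \eqref{G^1_G^2}--\eqref{G_f2} for its geodesic coefficients. The plan is to dispose of the first part in one line using positive homogeneity, and to obtain the second by a direct (if somewhat lengthy) computation from the classical formula for the geodesic spray, specialised to $n=2$ and to the slope ansatz $E=\tfrac12(y^1)^2 f(x,u)^2$.

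\medskip\noindent\textit{The representation.} For $y$ with $y^1\neq 0$ set $\varepsilon=\operatorname{sgn}(y^1)$ and $u=y^2/y^1$, and note that $y=|y^1|\,(\varepsilon,\varepsilon u)$. Since $|y^1|>0$, condition (b) (positive $1$-homogeneity of $F$ in $y$) gives $F(x,y)=|y^1|\,F(x,\varepsilon,\varepsilon u)=:|y^1|\,f(x,\varepsilon u)$, and $f$ is smooth and strictly positive on $M\times\R$ as a restriction of $F|_{\TM}$ to the (smooth) slice $\{(\varepsilon,\varepsilon u)\}$. Interchanging the roles of $y^1$ and $y^2$ gives the third line of \eqref{Finsler_Function} on $\{y^2\neq 0\}$; the two expressions agree on $\{y^1y^2\neq 0\}$ because both equal $F$, and $F(x,0)=0$ by condition (a). This gives \eqref{Finsler_Function}.

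\medskip\noindent\textit{The geodesic coefficients.} I would work on the cone $\{y^1>0\}$, where $E=\tfrac12(y^1)^2 f(x,u)^2$ with $u=y^2/y^1$; the cone $\{y^1<0\}$ is handled identically, and the remaining directions are then covered because the geodesic spray is globally defined. Starting from the classical identity $2G^i=g^{ij}\bigl(y^k\,\partial_{x^k}\paa_j E-\partial_{x^j}E\bigr)$ (which is available since $g_{ij}=\paa_i\paa_j E$ has rank $n=2$), the steps are: (1) differentiate in $y$ via $\paa_1 u=-u/y^1$, $\paa_2 u=1/y^1$, obtaining $\paa_1 E=y^1 f(f-uf')$, $\paa_2 E=y^1 f f'$, and hence the components
\[
g_{11}=(f-uf')^2+u^2 f f'',\qquad g_{12}=f f'-u\bigl((f')^2+f f''\bigr),\qquad g_{22}=(f')^2+f f'',
\]
which depend only on $(x,u)$, as they must being $0$-homogeneous; (2) compute $\det(g_{ij})=f^3 f''$ — this is exactly where the rank condition (c) enters, forcing $f>0$ and $f''\neq 0$, and it is the reason $f''$ appears in the denominators of \eqref{G_f1}--\eqref{G_f2} — and invert the $2\times 2$ matrix; (3) differentiate in $x$: since $u$ is $x$-independent, $\partial_{x^k}E=(y^1)^2 f\,\partial_k f$ and each $\Delta_j:=y^k\partial_{x^k}\paa_j E-\partial_{x^j}E$ equals $(y^1)^2$ times a function of $(x,u)$, the $x^1$- and $x^2$-contributions combining (via $y^2=uy^1$) precisely into $\partial_1 f+u\partial_2 f$ and $\partial_1 f'+u\partial_2 f'-\partial_2 f$; (4) substitute into $2G^i=g^{ij}\Delta_j$, pull out the factor $(y^1)^2$ — which yields $G^i=f_i(x,u)(y^1)^2$, i.e.\ \eqref{G^1_G^2}, and reconfirms that $G^i$ is $2$-homogeneous — cancel the $f^2$ coming from the cofactor entries against the $f^3$ in $\det g$, and collect terms, arriving at \eqref{G_f1} for $f_1$ and \eqref{G_f2} for $f_2$.

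\medskip\noindent\textit{Main obstacle.} There is no conceptual difficulty; the only real work is the bookkeeping in steps (2)--(4), namely verifying $\det(g_{ij})=f^3 f''$ and then checking that all cross terms in $g^{ij}\Delta_j$ cancel so as to leave the compact numerators of \eqref{G_f1}--\eqref{G_f2}. Care is needed to (i) treat $u$ consistently as a function of both $y^1$ and $y^2$ in every differentiation, (ii) use Euler's relations to simplify $y^k\partial_{x^k}\paa_j E$ before passing to the slice, and (iii) substitute $y^2=uy^1$ only at the very end. A convenient sanity check is that when $F$ does not depend on $x$ one has $\partial_1 f=\partial_2 f=0$, so $f_1=f_2=0$ and $G^i=0$, as required of a metric whose geodesics are affine lines.
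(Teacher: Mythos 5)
Your proposal is correct: the homogeneity argument gives \eqref{Finsler_Function}, and carrying out your steps (2)--(4) from $2G^i=g^{ij}\bigl(y^k\partial_{x^k}\paa_jE-\partial_{x^j}E\bigr)$ with $\det(g_{ij})=f^3f''$ does produce exactly \eqref{G_f1} and \eqref{G_f2}, the horizontal derivatives combining into $\partial_1f+u\partial_2f$ and $\partial_1f'+u\partial_2f'-\partial_2f$ as you predict. The paper itself states this lemma without proof, citing \cite{New_surfaces}, and the derivation there is essentially this same standard computation, so your route matches the intended one.
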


\begin{remark}
 It should be noted that if we start by regular Finsler function $F$, then the Finsler function $F(x,y)=\abs{y^1} f(x,\varepsilon u)$ is regular although the function $u$ has a singularity at $y^1=0$.
As an  example (cf. \cite[Example 1.2.2 Page 15]{shen-book1}):
$$F(x,y)=\sqrt{(y^1)^2+(y^2)^2}+B y^1=|y^1|\left(\sqrt{1+u^2}+\varepsilon B\right).$$
In this example $f(x, \varepsilon u)=\sqrt{1+u^2}+\varepsilon B$.
Since $F=0 $ only on the zero section, then away from the zero section at each $x\in M$, at least one   of the $y$'s is non zero, so without loss of generality, we assume that $y^1\neq 0$.
\end{remark}

\begin{lemma}[\cite{New_surfaces}]
  The components $L_{ijk}$ of the Landsberg curvature are given by
  \begin{equation}\label{Lands_tensors}
  \begin{split}
       L_{111}&=\frac{u^3f}{2}(f_1'''\ell_1+f_2'''\ell_2), \quad  L_{112}=-\frac{u^2f}{2}(f_1'''\ell_1+f_2'''\ell_2),  \\
        L_{122}&=-\frac{uf}{2}(f_1'''\ell_1+f_2'''\ell_2), \quad  L_{222}=-\frac{f}{2}(f_1'''\ell_1+f_2'''\ell_2).
\end{split}
  \end{equation}
\end{lemma}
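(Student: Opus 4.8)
The plan is to compute the Berwald--tensor components $G^h_{ijk}=\paa_i\paa_j\paa_k G^h$ directly from the explicit spray coefficients in \eqref{G^1_G^2} and then substitute into the defining relation $L_{ijk}=-\tfrac12 F\,G^h_{ijk}\,\paa_h F$. First I would record the chain-rule identities to be used throughout: since $u=y^2/y^1$ we have $\paa_1 u=-u/y^1$ and $\paa_2 u=1/y^1$, while $\paa_1(y^1)^2=2y^1$ and $\paa_2(y^1)^2=0$. In particular, for any $\phi=\phi(x,u)$ a short calculation gives, in turn,
\[
\paa_2\bigl(\phi\,(y^1)^2\bigr)=y^1\phi',\qquad
\paa_2^2\bigl(\phi\,(y^1)^2\bigr)=\phi'',\qquad
\paa_2^3\bigl(\phi\,(y^1)^2\bigr)=\frac{\phi'''}{y^1}.
\]

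Rather than differentiate $G^1$ and $G^2$ in all mixed directions, I would exploit homogeneity to shorten the work. Each $G^h$ is positively $2$-homogeneous in $y$, so $G^h_{jk}=\paa_j\paa_k G^h$ is $0$-homogeneous, and Euler's relation gives $y^\ell\,\paa_\ell G^h_{jk}=0$, that is, $y^1G^h_{1jk}+y^2G^h_{2jk}=0$, whence $G^h_{1jk}=-u\,G^h_{2jk}$. Since $G^h_{ijk}$ is totally symmetric in its lower indices, iterating this identity expresses every component through the single ``master'' component $G^h_{222}$:
\[
G^h_{111}=-u^3\,G^h_{222},\qquad G^h_{112}=u^2\,G^h_{222},\qquad G^h_{122}=-u\,G^h_{222}.
\]
Applying the last identity of the previous paragraph with $\phi=f_1$ and $\phi=f_2$ (the functions \eqref{G_f1}--\eqref{G_f2}) gives $G^1_{222}=f_1'''/y^1$ and $G^2_{222}=f_2'''/y^1$.

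It then remains to substitute into $L_{ijk}=-\tfrac12 F\bigl(\ell_1G^1_{ijk}+\ell_2G^2_{ijk}\bigr)$, where $\ell_i=\paa_iF$. Using $F=|y^1|f$, so that $F/y^1=f$ on the chart $y^1>0$ (the chart $y^1<0$ being treated identically, up to the sign $\varepsilon=\operatorname{sgn}(y^1)$), one finds $L_{222}=-\tfrac{f}{2}\bigl(f_1'''\ell_1+f_2'''\ell_2\bigr)$, and feeding this through the three homogeneity identities above yields the remaining components recorded in \eqref{Lands_tensors}.

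I do not expect a genuine obstacle: the statement reduces to a short, well-organized computation, and the homogeneity reduction is exactly what prevents it from degenerating into a tangle of mixed third derivatives. The points that need care are the bookkeeping of the chain rule through the singular substitution $u=y^2/y^1$ --- in particular the apparent singularity at $y^1=0$, which is harmless since one may always assume $y^1\neq0$, as in the Remark following Lemma \ref{Lemma_G1_G2} --- and the tracking of signs (the factor $F/y^1=\varepsilon f$ and the alternating powers of $-u$); a convenient consistency check on the final formulae is the semi-basic identity $y^iL_{ijk}=0$, which already forces the alternating-sign pattern.
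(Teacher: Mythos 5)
Your strategy is the natural (and essentially the only) proof: this paper gives no argument for the lemma, which is quoted from \cite{New_surfaces}, and the computation there is exactly the direct one you outline --- third vertical derivatives of $G^1=f_1(x,u)(y^1)^2$, $G^2=f_2(x,u)(y^1)^2$, reduced by the Euler identity $G^h_{1jk}=-u\,G^h_{2jk}$, then inserted into $L_{ijk}=-\tfrac12F\,\ell_hG^h_{ijk}$. Your intermediate formulas are correct: $G^h_{222}=f_h'''/y^1$, $G^h_{122}=-u\,G^h_{222}$, $G^h_{112}=u^2G^h_{222}$, $G^h_{111}=-u^3G^h_{222}$, and $F/y^1=\varepsilon f$.

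The one point you glossed over is your final claim that these identities return the components \emph{exactly as recorded} in \eqref{Lands_tensors}: they do not, for $L_{122}$. The computation gives $L_{122}=-\tfrac12F\left(-\tfrac{u}{y^1}\right)(f_1'''\ell_1+f_2'''\ell_2)=+\tfrac{uf}{2}(f_1'''\ell_1+f_2'''\ell_2)$ on $y^1>0$, so the signs alternate as $+u^3,\,-u^2,\,+u,\,-1$ times $\tfrac{f}{2}$. Your own consistency check detects this: with the minus sign printed in \eqref{Lands_tensors} one gets $y^1L_{122}+y^2L_{222}=-f\,y^2\,(f_1'''\ell_1+f_2'''\ell_2)\neq0$, contradicting $y^iL_{ijk}=0$, so the printed sign of $L_{122}$ is a typo rather than something your argument should (or can) reproduce; you should say so explicitly instead of asserting agreement. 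The discrepancy is harmless for everything downstream, since the Landsberg PDE \eqref{Eq:Landsberg_PDE} and Theorem \ref{Th:class-with-0-T} only use the common factor $f_1'''\ell_1+f_2'''\ell_2$; likewise the overall factor is strictly $\varepsilon f$ rather than $f$, which you handle correctly by restricting to the chart $y^1>0$.
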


\begin{lemma}[\cite{New_surfaces}]\label{Landsberg_PDE}
  Any  two dimensional Finsler  manifold $(M,F)$ in the form \eqref{Finsler_Function} is  Landsbergian if and only if the following PDE
  \begin{equation}
  \label{Eq:Landsberg_PDE}
  f_1'''\ell_1+f_2'''\ell_2=0
  \end{equation}
is satisfied. The above PDE is called the  Landsberg's PDE.
\end{lemma}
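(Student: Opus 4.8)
The plan is to read the statement off directly from the expressions \eqref{Lands_tensors} for the components of the Landsberg curvature, using only the fact that a Finsler surface is Landsbergian exactly when its Landsberg tensor vanishes identically. First I would note that, $L_{ijk}$ being totally symmetric, on a two-dimensional manifold it has only the four independent components $L_{111},\,L_{112},\,L_{122},\,L_{222}$, so $(M,F)$ is Landsbergian if and only if these four functions vanish on $\T M$. Because $F$ is written in the form \eqref{Finsler_Function}, it suffices to argue on the open dense set $\{y^1\neq 0\}$, where $F=\abs{y^1}f(x,\varepsilon u)$ and \eqref{Lands_tensors} applies (the region $\{y^2\neq 0\}$ is treated symmetrically, or one simply invokes smoothness of $L$ to extend the conclusion to the rest of $\T M$). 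On that set \eqref{Lands_tensors} exhibits all four components as scalar multiples of the single function $\Theta:=f_1'''\ell_1+f_2'''\ell_2$, with multipliers $\tfrac{u^3f}{2}$, $-\tfrac{u^2f}{2}$, $-\tfrac{uf}{2}$, $-\tfrac{f}{2}$.

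The two implications are now short. For ``if'': if $\Theta\equiv 0$, i.e. the Landsberg PDE \eqref{Eq:Landsberg_PDE} holds, then every component in \eqref{Lands_tensors} vanishes, so $L_{ijk}\equiv 0$ and $(M,F)$ is Landsbergian. For ``only if'': if $(M,F)$ is Landsbergian, look at $L_{222}=-\tfrac{f}{2}\,\Theta$; since $F$ is a (conic) Finsler function, $f(x,\varepsilon u)=F(x,\varepsilon,\varepsilon u)$ is strictly positive, so the coefficient $-\tfrac{f}{2}$ is nowhere zero, and $L_{222}\equiv 0$ forces $\Theta\equiv 0$, which is exactly \eqref{Eq:Landsberg_PDE}.

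I do not expect a genuine obstacle: all the analysis is already packed into the derivation of \eqref{Lands_tensors}, which we may assume. The only things to be careful about are that the common factor $f$ never vanishes --- this is what lets one well-chosen component carry the entire PDE, and conversely lets the PDE kill all components simultaneously --- and that the argument performed in the chart $y^1\neq 0$ really does propagate to all of $\T M$ by density and continuity. As a sanity check one may observe that, by Lemma \ref{Lemma:Matsumoto}(c), the whole two-dimensional Landsberg tensor is $I_{,1}m_im_jm_k$, so $\Theta$ is forced to be proportional to $I_{,1}$, and the equivalence ``$\Theta=0\iff$ Landsbergian'' is just the statement ``$I_{,1}=0\iff$ Landsbergian'' in disguise.
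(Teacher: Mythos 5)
Your argument is correct and is essentially the intended one: the paper itself imports this lemma from \cite{New_surfaces} without reproving it, and the equivalence is an immediate consequence of the component formulas \eqref{Lands_tensors} together with the strict positivity of $f$, exactly as you argue (all four components are multiples of $f_1'''\ell_1+f_2'''\ell_2$, and $L_{222}=-\tfrac{f}{2}(f_1'''\ell_1+f_2'''\ell_2)$ with $f>0$ forces the converse). Your care about working on $\{y^1\neq 0\}$ and extending by symmetry/continuity is appropriate and closes the only potential gap.
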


Let's define  the function $Q$ as follows
$$Q:=\frac{f'}{f-u f'}.$$
  Moreover,  the function $f$ is given by
\begin{equation}\label{Q_f}
f(x,u)=\exp\left(\int \frac{Q}{1+uQ}du\right).
\end{equation}
\begin{property}\label{property} For any Finsler surface the function $Q$ has the property
$$Q'\neq0.$$
\end{property}
\begin{proof}
  Assume that $Q'=0$. This implies $Q=\theta(x)$ and hence we have
$$\frac{Q}{1+uQ}=\frac{\theta(x)}{1+u\theta(x)}.$$
Therefore, by using \eqref{Finsler_Function} and \eqref{Q_f}, we have
$$F=\abs{y^1}\exp(\ln (1+u\theta(x))= \varepsilon (y^1+\theta(x)y^2).$$
This means that the Finsler function is linear and hence the metric tensor is degenerate which is a contradiction.
\end{proof}
Consider  the conformal transformation
\begin{equation}\label{Conformal_change}
  \overline{F}=e^{\sigma(x)} F=\abs{y^1}e^{\sigma(x)} f(x,u).
\end{equation}

Keeping in mind the Property \ref{property}, we have the following.
\begin{proposition}
  Under the conformal transformation \eqref{Conformal_change}, we have
  \begin{equation}\label{L_tensor_conformal}
    \overline{f}_1'''+ \overline{Q}\overline{f}_2'''=f_1'''+Qf_2'''+\frac{2 \sigma_1 QQ'Q'''-3\sigma_1 QQ''^2-2 \sigma_2 Q'Q'''+3\sigma_2 Q''^2}{2 Q'^2}
  \end{equation}
\end{proposition}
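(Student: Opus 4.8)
The plan is to reduce everything to the conformal behaviour of the single scalar $f(x,u)$ and of the auxiliary function $Q$. Since $\sigma=\sigma(x)$ does not depend on $u$, differentiation in $u$ commutes with multiplication by $e^{\sigma}$, so from \eqref{Conformal_change} we get $\overline{f}=e^{\sigma}f$, $\overline{f}'=e^{\sigma}f'$, $\overline{f}''=e^{\sigma}f''$, and $\pa_a\overline{f}=e^{\sigma}(\sigma_a f+\pa_a f)$, $\pa_a\overline{f}'=e^{\sigma}(\sigma_a f'+\pa_a f')$ for $a=1,2$. In particular
$$\overline{Q}=\frac{\overline{f}'}{\overline{f}-u\overline{f}'}=\frac{e^{\sigma}f'}{e^{\sigma}(f-uf')}=Q ,$$
so the left-hand side of \eqref{L_tensor_conformal} is really $\overline{f}_1'''+Q\,\overline{f}_2'''$, and all derivatives of $Q$ on the right are unambiguous (recall $Q'\neq 0$, so the denominators below make sense).

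First I would substitute these rules into the formulae \eqref{G_f1}--\eqref{G_f2} for $\overline{f}_1$ and $\overline{f}_2$. The overall factor $e^{2\sigma}$ cancels between numerator and denominator, and the terms surviving beyond $f_1$ and $f_2$ are polynomial in $f,f',f''$ and in $\sigma_1,\sigma_2,u$. To express them through $Q$ I would use the two elementary identities
$$f-uf'=\frac{f}{1+uQ},\qquad ff''=Q'\,(f-uf')^2 ,$$
the second being just the computation $Q'=\dfrac{ff''}{(f-uf')^2}$. A short simplification then yields
$$\overline{f}_1=f_1+A,\qquad \overline{f}_2=f_2+B, \qquad A=\frac{\sigma_1+\sigma_2 u}{2}-Q\,\Phi, \qquad B=\frac{\sigma_1 u+\sigma_2 u^2}{2}+\Phi ,$$
where $\Phi:=\dfrac{\sigma_1 Q-\sigma_2}{2Q'}$. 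The structural point to extract here is that the non-polynomial part of $A$ is exactly $-Q$ times that of $B$, while the polynomial parts are polynomials in $u$ of degree at most two and hence annihilated by $\pa_u^3$.

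Consequently $\overline{f}_1'''+Q\,\overline{f}_2'''=(f_1'''+Qf_2''')+(A'''+QB''')$ with $A'''+QB'''=-(Q\Phi)'''+Q\Phi'''$, and expanding $(Q\Phi)'''$ by the Leibniz rule collapses this to
$$A'''+QB'''=-\big(Q'''\Phi+3Q''\Phi'+3Q'\Phi''\big),$$
the $Q\Phi'''$ terms having cancelled. It then remains to differentiate $\Phi=(\sigma_1Q-\sigma_2)/(2Q')$ twice; writing $\varphi:=\sigma_1 Q-\sigma_2$ (so $\varphi'=\sigma_1Q'$, $\varphi''=\sigma_1Q''$) one gets $\Phi'=\dfrac{\varphi'Q'-\varphi Q''}{2Q'^2}$ and $\Phi''=\dfrac{\varphi''Q'^2-\varphi Q'''Q'-2\varphi'Q'Q''+2\varphi Q''^2}{2Q'^3}$. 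Substituting these and collecting over the common denominator $2Q'^2$, the $\sigma_1Q'^2Q''$ contributions cancel and what is left is precisely the right-hand side of \eqref{L_tensor_conformal}. The only genuine difficulty is the algebraic bookkeeping, which is kept manageable by three observations: that $\overline{Q}=Q$, that isolating the scalar $\Phi$ reduces the third derivative of the correction to three terms, and that $Q'=ff''/(f-uf')^2$ allows one to avoid ever handling $f''$ directly.
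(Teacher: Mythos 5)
Your proposal is correct and follows essentially the same route as the paper: you verify $\overline{Q}=Q$, compute the conformal corrections to $f_1$ and $f_2$ from \eqref{G_f1}--\eqref{G_f2} (your $A$ and $B$ agree exactly with the paper's expressions for $\overline{f}_1-f_1$ and $\overline{f}_2-f_2$), and then take third $u$-derivatives. The only difference is bookkeeping: the paper substitutes explicit formulas for $(1/Q')'''$, $(Q/Q')'''$ and $(Q^2/Q')'''$, while you package the correction into the single scalar $\Phi=(\sigma_1Q-\sigma_2)/(2Q')$ and use the Leibniz rule, which yields the same right-hand side.
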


\begin{proof}
  Consider  the conformal transformation \eqref{Conformal_change}, then we have

$$\overline{f}'=e^{\sigma} f', \quad \overline{f}''=e^{\sigma} f'',$$
$$\partial_1\overline{f}=e^{\sigma} \partial_1f+e^{\sigma} f \partial_1\sigma  , \quad \partial_2\overline{f}=e^{\sigma} \partial_2f+e^{\sigma} f\partial_2\sigma,  $$
$$\partial_1\overline{f}'=e^{\sigma} \partial_1f'+e^{\sigma} f' \partial_1\sigma  , \quad \partial_2\overline{f}'=e^{\sigma} \partial_2f'+e^{\sigma} f'\partial_2\sigma.$$

 By  making use of the above relations together with the help of the quantities $Q=\frac{  f'}{f-uf'}$,  $Q'=\frac{ff''}{(f-uf')^2}$, then  \eqref{G_f1} and \eqref{G_f2} lead to
\begin{eqnarray*}
% \nonumber % Remove numbering (before each equation)
   \overline{f}_1
    &=&  f_1+  \frac{\partial_1\sigma+u\partial_2\sigma}{2}+\frac{\partial_2\sigma}{2}\frac{Q}{Q'}- \frac{\partial_1\sigma}{2} \frac{Q^2}{Q'},
\end{eqnarray*}
\begin{eqnarray*}
% \nonumber % Remove numbering (before each equation)
   \overline{f}_2
    &=&  f_2+  \frac{u(\partial_1\sigma+u\partial_2\sigma)}{2}+\frac{\partial_1\sigma}{2}\frac{Q}{Q'}-\frac{\partial_2\sigma}{2}\frac{1}{Q'}.
\end{eqnarray*}

Moreover, we have  the formulae
$$\left(\frac{1}{Q'}\right)'''=-\frac{Q'^2Q''''-6Q'Q''Q'''+6Q''^3}{Q'^4},$$
$$\left(\frac{Q}{Q'}\right)'''=-\frac{2Q'^3Q'''-3Q'^2Q''^2+QQ'^2Q''''-6QQ'Q''Q'''+6QQ''^3}{Q'^4},$$
$$\left(\frac{Q^2}{Q'}\right)'''=-\frac{Q\left(4Q'^3Q'''-6Q'^2Q''^2+QQ'^2Q''''-6QQ'Q''Q'''+6QQ''^3\right)}{Q'^4}.$$
Now, since $\overline{Q}=Q$  and using the above formulae of $\overline{f}_1$ and $\overline{f}_2$, then straightforward calculations yield \eqref{L_tensor_conformal}.
\end{proof}

\begin{theorem}\label{Th:class-with-0-T}
  The Landsberg tensor of a Finsler surface $(M,F)$ is invariant under the conformal change \eqref{Conformal_change} if and only if
 \begin{equation}\label{Eq:Formula_1}
   f(x,u)=\sqrt{c_3u^2+(c_2c_3-4c_1+1)u+c_2} \ e^{ \frac{(-c_2c_3+4c_1+1 )\operatorname{ arctanh}\left( \frac{2c_3 u+c_2c_3-4c_1+1}{\sqrt{c_2^2c_3^2-8c_1c_2c_3+16c_1^2-2c_2c_3-8c_1+1}} \right) }{\sqrt{c_2^2c_3^2-8c_1c_2c_3+16c_1^2-2c_2c_3-8c_1+1}}}
 \end{equation}
  or
  \begin{equation}\label{Eq:Formula_2}
   f(x,u)=\sqrt{au^2+bu+ 1} \ e^{ - \frac{b}{\sqrt{b^2-4a}}\ \text{arctanh}\left( \frac{2a u+b }{ \sqrt{b^2-4a}}\right) }
 \end{equation}
  where $c_1$, $c_2$, $c_3$, $a$ and $b$ are functions of $x^1$ and $x^2$.
\end{theorem}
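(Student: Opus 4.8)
The strategy is to translate the invariance of the Landsberg tensor into a single ordinary differential equation for the scalar $Q=\frac{f'}{f-uf'}$, solve that equation, and then recover $f$ from \eqref{Q_f}.

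By Lemma~\ref{Lemma:Matsumoto} (see also \eqref{Lands_tensors}), on a surface each component of the Landsberg tensor is a fixed multiple of the single quantity $f_1'''\ell_1+f_2'''\ell_2$; under \eqref{Conformal_change} one has $\overline{u}=u$, $\overline{f}=e^{\sigma}f$, $\overline{\ell}_i=e^{\sigma}\ell_i$ and $\overline{Q}=Q$, so the Landsberg tensor transforms with its natural conformal weight (equivalently, by \eqref{confrmal_L_tensor}, $\sigma_rT^r_{jkh}=0$) precisely when $\overline{f}_1'''+\overline{Q}\,\overline{f}_2'''=f_1'''+Q f_2'''$. By the Proposition containing \eqref{L_tensor_conformal}, this holds if and only if
$$2\sigma_1 QQ'Q'''-3\sigma_1 QQ''^2-2\sigma_2 Q'Q'''+3\sigma_2 Q''^2=0,$$
that is, if and only if $(\sigma_1 Q-\sigma_2)(2Q'Q'''-3Q''^2)=0$. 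Since the conformal change is non-homothetic, $\sigma$ is non-constant, so $\sigma_1$ and $\sigma_2$ do not both vanish; and since $Q$ is a non-constant function of $u$ (Property~\ref{property}, $Q'\neq0$), the factor $\sigma_1 Q(u)-\sigma_2$ cannot vanish identically in $u$. Hence invariance of the Landsberg tensor is equivalent to the single ODE
$$2Q'Q'''-3Q''^2=0.$$

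I would then integrate this ODE. The key identity is $\big((Q')^{-1/2}\big)''=-\tfrac14(Q')^{-5/2}\,(2Q'Q'''-3Q''^2)$, so (using $Q'\neq0$) the equation says exactly that $(Q')^{-1/2}$ is an affine function of $u$. Therefore either $Q'$ is a nonzero constant and $Q$ is affine in $u$, or $Q'=(\lambda u+\mu)^{-2}$ with $\lambda\neq0$ and $Q=C-\frac{1}{\lambda(\lambda u+\mu)}$ is a genuine fractional-linear function of $u$; in both cases $Q=\frac{pu+q}{ru+s}$ with $ps-qr\neq0$ and coefficients depending on $x$. Substituting into \eqref{Q_f} and simplifying $\frac{Q}{1+uQ}=\frac{pu+q}{pu^2+(q+r)u+s}$, the formula $f=\exp\!\big(\int\frac{Q}{1+uQ}\,du\big)$ reduces to integrating a linear-over-quadratic rational function: writing the numerator as $\tfrac12$ of the derivative of the denominator plus a constant gives $f=\sqrt{\,pu^2+(q+r)u+s\,}\;e^{\,\kappa\operatorname{arctanh}\big(\frac{2pu+q+r}{\sqrt{(q+r)^2-4ps}}\big)}$ for a suitable constant $\kappa$. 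In the fractional-linear case one uses the scaling freedom of $Q$ to normalize the coefficient of $u$ in its denominator to $1$; with $p=c_3$, $s=c_2$ and $ps-qr=4c_1$ this gives exactly \eqref{Eq:Formula_1} (the Möbius determinant $4c_1\neq0$ being responsible for the appearance of $c_1$). In the degenerate case $Q$ is affine ($r=0$), the quadratic has constant term $s$, which one normalizes to $1$, giving \eqref{Eq:Formula_2}. The converse is the reverse computation: for $f$ of either form one checks directly that $Q=\frac{f'}{f-uf'}$ is fractional-linear, hence $2Q'Q'''-3Q''^2=0$, hence by \eqref{L_tensor_conformal} the Landsberg tensor is invariant.

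The reduction to the ODE and its integration are both short. The main labor, and the only point where care is required, is the bookkeeping that matches the integration constants to the precise combinations in \eqref{Eq:Formula_1} (the $4c_1$ term and the discriminant $c_2^2c_3^2-8c_1c_2c_3+16c_1^2-2c_2c_3-8c_1+1$), together with keeping the two normalization cases separate and observing that when the relevant discriminant is negative the $\operatorname{arctanh}$ is to be read as the corresponding $\operatorname{arctan}$ expression, the non-degeneracy $ps-qr\neq0$ ensuring the argument is well defined throughout.
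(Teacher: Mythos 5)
Your proposal is correct and follows essentially the same route as the paper: reduce invariance to that of $f_1'''+Qf_2'''$, use \eqref{L_tensor_conformal} to factor the condition as $(\sigma_1Q-\sigma_2)(2Q'Q'''-3Q''^2)=0$, discard the first factor via $Q'\neq 0$, and then integrate $\tfrac{Q}{1+uQ}$ in \eqref{Q_f}. The only (minor, and in fact cleaner) deviation is that you integrate $2Q'Q'''-3Q''^2=0$ at one stroke via $\bigl((Q')^{-1/2}\bigr)''=0$, so $Q$ is fractional-linear, whereas the paper splits into the cases $Q''=0$ and $Q''\neq 0$ and integrates stepwise to $Q=\tfrac{c_2}{2c_1-u}+c_3$; the resulting solution families and the constant-matching are the same.
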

\begin{proof}
The components of the Landsberg tensor are given by \eqref{Lands_tensors}. The common term in all these components is
$$f_1'''\ell_1+f_2'''\ell_2=f_1'''+f_2'''\varepsilon f'=\varepsilon( f-uf')(f_1'''+Qf_2''')$$
where $\ell_1=\paa_1 F=\varepsilon( f-uf')$ and $\ell_2=\paa_2 F=\varepsilon f'$.
It is clear that all components of the Landsberg tensor are invariant under the conformal transformation \eqref{Conformal_change} if and only if the quantity $f_1'''+Qf_2'''$ is itself invariant.

Now, using making use of \eqref{L_tensor_conformal} the quantity $f_1'''+Qf_2'''$ is   invariant if and only if
$$\frac{2 \sigma_1 QQ'Q'''-3\sigma_1 QQ''^2-2 \sigma_2 Q'Q'''+3\sigma_2 Q''^2}{2 Q'^2}=0.$$
This implies
$$(\sigma_1 Q-\sigma_2)(2Q'Q'''-3Q''^2)=0.$$
By Property \ref{property}, the choice $\sigma_1 Q-\sigma_2=0$ implies a contradiction. Therefore, we have
$$2Q'Q'''-3Q''^2=0.$$

 If $Q''=0$, then $Q=au+b$.
  Now, we have
$$\frac{Q}{1+uQ}=\frac{au+b}{au^2+bu+1}=\frac{2au+b}{2(au^2+bu+1)}-\frac{2ab}{b^2-4a-(2au+b)^2}.$$
 Hence,
    $$\int \frac{Q}{1+uQ} du=\frac{1}{2}\ln{(a u^2+bu+1)}-\frac{ b  }{\sqrt{b^2-4a}} \operatorname{arctanh}\left( \frac{2a u+b }{ \sqrt{b^2-4a}}\right) .$$
  By substituting into \eqref{Q_f}, we have
    \begin{equation*}
   	f=\sqrt{au^2+bu+ 1} \ e^{ - \frac{b}{\sqrt{b^2-4a}}\ \text{arctanh}\left( \frac{2a u+b }{ \sqrt{b^2-4a}}\right) }.
   \end{equation*}
   Where $ a,b$ are functions of $x^1$ and $x^2$.

 Now assume that $Q''\neq 0$. Then the above PDE can be rewritten in the form
  $$1+2\left( \frac{Q'}{Q''}\right)'=0.$$
  Moreover, the above PDE has the solution
  $$\frac{Q'}{Q''}=-\frac{1}{2}u+c_1.$$
 Furthermore, we can find $Q'$, since
  $$\frac{Q''}{Q'}=\frac{2}{2c_1-u}.$$
  Which gives easily the formula of $Q'$ as follows
   \begin{equation*}\label{Q_t}
   Q'=\frac{c_2}{(2c_1-u)^2}.
   \end{equation*}
  That is, we get

   \begin{equation*}\label{Q}
   Q=\frac{c_2}{2c_1-u}+c_3,
  \end{equation*}
  where $c_1,c_2,c_3$ are arbitrary functions on    $M$.
  Now, we have
 $$\frac{Q}{1+uQ}=\frac{-c_3u+2c_1c_3+c_2}{-c_3u^2+(2c_1c_3+c_2-1)u+2c_1}$$
 which can be rewritten in the following useful form
  $$\frac{Q}{1+uQ}=\frac{1}{2}\frac{2c_3u-c+2}{c_3u^2-(c-2)u-2c_1}+\frac{2c c_3}{(c^2-4c_2)-(2c_3u-c+2)^2}.$$
  Hence, we have
    $$\int \frac{Q}{1+uQ} du=\frac{1}{2}\ln{(c_3u^2-(c-2)u-2c_1)}+\frac{ c  }{\sqrt{c^2-4c_2}} \operatorname{arctanh}\left( \frac{2c_3 u-(c-2) }{ \sqrt{c^2-4c_2}}\right) .$$
  \end{proof}
By making use of \eqref{confrmal_L_tensor},  Theorem \ref{main_theorem}, \eqref{Lands_tensors} and Theorem \ref{Th:class-with-0-T}, we can prove the following theorem.
\begin{theorem}
  A Finsler surface $(M,F)$ has vanishing T-tensor if and only if the function $f(x,u)$ is given by \eqref{Eq:Formula_1} or \eqref{Eq:Formula_2}.
\end{theorem}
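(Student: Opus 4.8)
The plan is to reduce the claimed characterization of Finsler surfaces with vanishing $T$-tensor to the conformal-invariance characterization already obtained in Theorem~\ref{Th:class-with-0-T}, using Theorem~\ref{main_theorem} as the bridge. First I would recall that, by Property~\ref{T-tensor=0}, a Finsler surface has vanishing $T$-tensor exactly when $I_{;2}=0$; and by Lemma~\ref{Lemma:Matsumoto}(c)--(d) together with \eqref{Lands_tensors}, the Landsberg tensor and the $\sigma T$-term $F\sigma_r T^r_{jkh}$ are both built from the same scalar combination $f_1'''\ell_1+f_2'''\ell_2 = \varepsilon(f-uf')(f_1'''+Qf_2''')$. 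The key observation is that \eqref{confrmal_L_tensor} says $\overline{L}_{jkh}=e^{2\sigma}L_{jkh}+e^{2\sigma}F\sigma_r T^r_{jkh}$, so the Landsberg tensor is invariant (up to the conformal factor $e^{2\sigma}$) under \emph{every} conformal change precisely when $F\sigma_r T^r_{jkh}=0$ for all $\sigma$, which in the surface case — since $\sigma_r m^r$ can be made non-vanishing by a suitable choice of $\sigma$ — forces $I_{;2}=0$, i.e.\ the $T$-tensor vanishes; conversely vanishing $T$-tensor makes this term vanish identically.

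Next I would identify this with the computation in Theorem~\ref{Th:class-with-0-T}. That theorem shows that the quantity $f_1'''+Qf_2'''$ (equivalently, all components of $L_{ijk}$ by \eqref{Lands_tensors}) is invariant under the conformal change \eqref{Conformal_change} if and only if $2Q'Q'''-3Q''^2=0$, and solving this ODE produces exactly the two families \eqref{Eq:Formula_1} and \eqref{Eq:Formula_2}. So the chain of equivalences is: $f$ is of the form \eqref{Eq:Formula_1} or \eqref{Eq:Formula_2} $\iff$ $2Q'Q'''-3Q''^2=0$ $\iff$ the Landsberg tensor is conformally invariant $\iff$ (by \eqref{confrmal_L_tensor} and Property~\ref{property}, choosing $\sigma$ with $\sigma_1Q-\sigma_2\neq0$) $\sigma_r T^r_{jkh}=0$ for some non-constant $\sigma$, i.e.\ the $\sigma T$-condition holds $\iff$ (by Theorem~\ref{main_theorem}) the $T$-condition holds, i.e.\ $T^h_{ijk}=0$. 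I would write out this chain explicitly, being careful to note that in the step ``$2Q'Q'''-3Q''^2=0$ $\Rightarrow$ conformal invariance'' one uses \eqref{L_tensor_conformal}: the correction term is $\dfrac{(\sigma_1Q-\sigma_2)(2Q'Q'''-3Q''^2)}{2Q'^2}$, so it vanishes as soon as $2Q'Q'''-3Q''^2=0$ regardless of $\sigma$, and in the reverse step one uses Property~\ref{property} to rule out $\sigma_1Q-\sigma_2=0$.

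The main obstacle — really the only place that needs care — is making the logical direction of Theorem~\ref{main_theorem} line up with what is actually needed here. Theorem~\ref{main_theorem} concerns the existence of \emph{one} non-constant $\sigma$ with $\sigma_r T^r_{jkh}=0$, whereas the conformal-invariance statement of Theorem~\ref{Th:class-with-0-T} quantifies over \emph{all} $\sigma$; the reconciliation is that on a surface the correction term factors through $(\sigma_1Q-\sigma_2)$, so invariance for one generic $\sigma$ (one with $\sigma_1 Q\neq\sigma_2$, which exists since $Q$ is non-constant by Property~\ref{property}) already forces $2Q'Q'''-3Q''^2=0$, hence invariance for all $\sigma$ and also $I_{;2}=0$ by Lemma~\ref{Lemma:Matsumoto}(d). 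Thus I would phrase the proof as: assume $T^h_{ijk}=0$; then by \eqref{confrmal_L_tensor} the Landsberg tensor scales by $e^{2\sigma}$ under \eqref{Conformal_change}, so by Theorem~\ref{Th:class-with-0-T} $f$ has one of the two stated forms. Conversely, if $f$ has one of these forms, then by Theorem~\ref{Th:class-with-0-T} the Landsberg tensor is conformally invariant, so by \eqref{confrmal_L_tensor} $F\sigma_r T^r_{jkh}=0$ for the (non-constant) $\sigma$ of the change, i.e.\ the $\sigma T$-condition holds, and Theorem~\ref{main_theorem} gives $T^h_{ijk}=0$. Finally I would translate $f(x,u)$ back to $F(x,y)=|y^1|f(x,\varepsilon u)$ via \eqref{Finsler_Function} with $u=y^2/y^1$ to recover the two displayed formulas for $F$ in the introduction, observing that multiplying $u$ by powers and the radicand homogenizes correctly to the stated expressions in $y^1,y^2$.
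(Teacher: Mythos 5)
Your proposal is correct and assembles exactly the ingredients the paper itself cites for this theorem --- the conformal transformation law \eqref{confrmal_L_tensor}, the component formulas \eqref{Lands_tensors}, the equivalence of the $T$- and $\sigma T$-conditions (Theorem \ref{main_theorem}), and the conformal-invariance classification (Theorem \ref{Th:class-with-0-T}) --- in the same chain of equivalences the author intends, including the needed care that a non-homothetic $\sigma$ with $\sigma_1 Q-\sigma_2\neq 0$ is used so that invariance forces $2Q'Q'''-3Q''^2=0$. So it matches the paper's (unwritten but indicated) argument essentially verbatim.
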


 It should be noted that the two   classes \eqref{Eq:Formula_1} and  \eqref{Eq:Formula_2} are not Landsbergian in general.  Since all Landsberg surfaces with vanishing T-tensor are Berwaldian cf. \cite{Matsumoto}, then we have the following corollary.
 \begin{corollary}
   If the  classes \eqref{Eq:Formula_1} and  \eqref{Eq:Formula_2} are Landsbergian then they must be Berwaldian.
\end{corollary}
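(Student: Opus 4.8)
The plan is to deduce the corollary from the Bácsó–Matsumoto theorem together with the classification established in the immediately preceding theorem. That theorem asserts that \emph{every} member of the classes \eqref{Eq:Formula_1} and \eqref{Eq:Formula_2} has vanishing T-tensor, i.e. satisfies the T-condition unconditionally, regardless of any Landsberg hypothesis. Consequently, if such a surface is \emph{additionally} assumed to be Landsbergian, then it is a Landsberg surface satisfying the T-condition, and \cite[Theorem 2]{Matsumoto} (equivalently, the generalized version proved earlier in this section) forces it to be Berwaldian. This is the short route, and the only logical point needing care is that ``Landsbergian'' is the \emph{extra} hypothesis being imposed, since the classes \eqref{Eq:Formula_1} and \eqref{Eq:Formula_2} are not Landsbergian in general.

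For completeness I would also record a self-contained argument through the Berwald-frame formulas of Lemma \ref{Lemma:Matsumoto}. First, since the classes \eqref{Eq:Formula_1} and \eqref{Eq:Formula_2} have vanishing T-tensor, Property \ref{T-tensor=0} gives $I_{;2}=0$, equivalently the main scalar reduces to a point function $I=I(x)$. For such an $I$ the vertical derivatives vanish, $\dot\partial_j I=0$, so the Berwald horizontal derivative collapses to the ordinary partial derivative: $I_{|i}=\delta_i I=\partial_i I - G^j_i\dot\partial_j I=\partial_i I(x)$, a function of the position alone.

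Next I would bring in the Landsberg hypothesis. By Lemma \ref{Lemma:Matsumoto}(c) the surface is Landsbergian precisely when $I_{,1}=0$. Writing $I_{,1}=\ell^i I_{|i}=\frac{y^i}{F}\,\partial_i I$, the vanishing of $I_{,1}$ for every nonzero $y$ means that the linear form $y\mapsto y^i\partial_i I$ vanishes identically, whence $\partial_1 I=\partial_2 I=0$. Therefore $I_{|i}=0$ for all $i$, and in particular $I_{,2}=m^i I_{|i}=0$ as well. Thus $I_{,1}=I_{,2}=0$, which by definition is exactly the Berwaldian condition.

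The proof is essentially immediate once the classification (vanishing T-tensor, hence $I=I(x)$) is in hand, so there is no serious obstacle. The single point requiring attention is the implication $I_{,1}=0\Rightarrow \partial_i I=0$: here one must use that $I_{,1}=\frac{1}{F}\,y^i\partial_i I$ is required to vanish for \emph{all} directions $y$, not merely along a distinguished one, so that the coefficients $\partial_i I$ of the resulting linear form in $y$ are all forced to be zero.
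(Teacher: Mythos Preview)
Your proposal is correct and your short route is precisely the paper's own justification: the corollary is stated without a formal proof, preceded only by the remark that all Landsberg surfaces with vanishing $T$-tensor are Berwaldian (citing \cite{Matsumoto}), which combined with the preceding theorem yields the claim. Your self-contained Berwald-frame argument is also correct and simply unpacks the content of \cite[Theorem~2]{Matsumoto}.
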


\begin{remark}
  In terms of $y^1$ and $y^2$, the classes \eqref{Eq:Formula_1} and \eqref{Eq:Formula_2}  are given as follows
  \begin{equation*}
   F(x,y)=\sqrt{c_3(y^2)^2+(c_2c_3-4c_1+1)y^1y^2+c_2 (y^1)^2} \ e^{ \frac{(-c_2c_3+4c_1+1 )\operatorname{ arctanh}\left( \frac{2c_3 y^2+(c_2c_3-4c_1+1)y^1}{y^1\sqrt{c_2^2c_3^2-8c_1c_2c_3+16c_1^2-2c_2c_3-8c_1+1}} \right) }{\sqrt{c_2^2c_3^2-8c_1c_2c_3+16c_1^2-2c_2c_3-8c_1+1}}}
 \end{equation*}
  or
  \begin{equation*}
  F(x,y)=\sqrt{a(y^2)^2+by^1y^2+ (y^1)^2} \ e^{ - \frac{b}{\sqrt{b^2-4a}}\ \text{arctanh}\left( \frac{2a y^2+b y^1 }{ y^1\sqrt{b^2-4a}}\right) }
 \end{equation*}
  where $c_1$, $c_2$, $c_3$, $a$ and $b$ are functions of $x^1$ and $x^2$.
\end{remark}

\section*{Declarations}

 \medskip

  \noindent \textbf{Ethical Approval:}  Not applicable.

  \medskip

\noindent \textbf{Competing interests:}  The author  declares no conflict of interest.

 \medskip

\noindent \textbf{Authors' contributions:} The author wrote the whole manuscript.

  \medskip

\noindent \textbf{Funding:} Not applicable.

 \medskip

\noindent \textbf{Availability of data and materials:}   Not applicable.

%%%%%%%%%%%%%%%%%%%%%%%%%%%%%%%%%%%%%%%%%%%%%%%%%%%%%%%%%%%%%%%%%%%%%%%%%%%%%%%%%%%%%%%%%%%%
\providecommand{\bysame}{\leavevmode\hbox
to3em{\hrulefill}\thinspace}
\providecommand{\MR}{\relax\ifhmode\unskip\space\fi MR }
% \MRhref is called by the amsart/book/proc definition of \MR.
\providecommand{\MRhref}[2]{%
  \href{http://www.ams.org/mathscinet-getitem?mr=#1}{#2}
} \providecommand{\href}[2]{#2}

\end{document}